\newtheorem{thm}{Theorem}
\newtheorem{remark}[thm]{Remark}
\newtheorem{prop}[thm]{Proposition}
\newtheorem{lemma}[thm]{Lemma}
\newcommand{\nwc}{\newcommand}
\nwc{\D}{\partial}
\nwc{\grad}{\nabla}
\newcommand{\eqdef}{\overset{\mbox{\tiny{def}}}{=}}
\def\ss {\mu}
\def\rr {\rho}
\def\Ddim {d}
\newcommand{\ba}{\begin{equation}}
\newcommand{\ea}{\end{equation}}
\newcommand{\bea}{\begin{eqnarray}}
\newcommand{\eea}{\end{eqnarray}}
\begin{document}

\title[Global stability for solutions to the exponential PDE $h_t=\Delta e^{-\Delta h}$]{Global stability for solutions to the exponential PDE describing epitaxial growth}

\author{Jian-Guo Liu}
\address{Department of Physics and Department of Mathematics, Duke University, Durham, NC 27707}
\email{Jian-Guo.Liu@duke.edu}
\thanks{J.-G. L. was partially supported by the NSF grant DMS 1514826 and the NSF Research Network Grant no. RNMS11-07444 (KI-Net)}

\author{Robert M. Strain}
\address{Department of Mathematics, University of Pennsylvania, Philadelphia, PA 19104, USA.}
\email{strain@math.upenn.edu}
\thanks{R.M.S. was partially supported by the NSF grant DMS-1500916.}


\begin{abstract}
In this paper we prove the global existence, uniqueness, optimal large time decay rates, and uniform gain of analyticity for the exponential PDE $h_t=\Delta e^{-\Delta h}$ in the whole space $\mathbb{R}^d_x$.  We assume the initial data is of medium size in the critical Wiener algebra $\Delta h \in A(\mathbb{R}^d)$.   This exponential PDE was derived in \cite{D24} and more recently in \cite{MarzuolaWeare2013}.
\end{abstract}

\setcounter{tocdepth}{1}

\maketitle
\tableofcontents
\section{Introduction and main results}

Epitaxial growth is an important physical process for forming solid films or
other nano-structures.  Indeed it is the only affordable method of high quality crystal 
growth for many semiconductor materials. It is also an  important tool to produce 
some single-layer films to perform experimental researches, highlighted by the 
recent breakthrough experiments on the quantum anomalous Hall effect 
and superconductivity above 100 K leaded by Qikun Xue \cite{Xue13, Xue15}.

This subject has been the focus of research from both physics and mathematics 
since the classic description of step dynamics in the work of Burton, Cabrera, Frank in 1951 \cite{BCF51}, 
 Weeks \cite{Weeks79} in the 1970's, the
KPZ stochastic partial differential equation description beyond roughness transition in 1986 \cite{KPZ86}, 
and the mathematical analysis of Spohn in 1993 \cite{Spohn93}. 
We refer to the books \cite{PimpinelliVillain:98, Zangwill:88} for a physical explanation of epitaxy growth. 
For more recent modeling and analysis, we refer to in particular to \cite{GigaKohn, GigaGiga, Leoni2014, Leoni2015, LuLiuMargetis:15} and the references therein. 

Epitaxy growth occurs as atoms, deposited from above, adsorb and
diffuse on a crystal surface. 
Modeling the rates that the atoms hop and break bonds leads in the continuum limit to the degenerate 4th-order PDE $h_t=\Delta e^{-\Delta_p h}$, which involves the exponential
nonlinearity and the p-Laplacian $\Delta_p$ with p=1, for example.  
In this paper, we will focus on this class of exponential PDE  for the 
the case $p=2$ and we give a short derivation of the model below.

Let $h(x,t)$ be the height of a thin film.  We consider the dynamics of atom deposition, detachment and diffusion on a crystal surface in the epitaxy growth process. 
In absence of atom deposition and in the continuum limit, the above process can be well described by 
 Fick's law: 
$$
  h_t + \nabla \cdot J = 0, \quad J = - D_s \nabla \rho_s.
$$
Here $D_s$ is the surface diffusion constant and $\rho_s$ is the equilibrium density of adatoms on a substrate of the thin film.  It is described by the grand canonical ensemble $e^{-(E_s - \mu_s)/k_BT}$ up to a normalization constant, where $E_{s}$ is the energy of pre adatom, $\mu_s$ is the chemical potential pre adatom, $k_B$ is the Boltzmann constant and $T$ is the temperature. We lump $e^{-E_s/k_BT}$ and the normalization constant into a reference density $\rho^0$
and then we arrive the Gibbs-Thomson relation $\rho_s = \rho^0 e^{\mu_s/k_BT}$ which is connected to the theory of molecular capillarity \cite{Rowlinson}. 

In the continuum limit, the chemical potential $\mu_s$ is computed by the the variation of free energy of the thin film. 
A simple broken-bond models for crystals  consists of height columns described by $h=(h_i)_{i=1,\ldots N}$ with screw-periodic boundary conditions in the form
\begin{equation*}
	h_{i+N}=h_i+\alpha a N \quad \forall i~,
\end{equation*}
where $\alpha$ is the average slope and $a$ is the side length. The  column $h_i$ is derived into $h_i/a$ square boxes where an atom is placed to the center of each box. The atoms then connect to the nearest neighbor atoms with a bond from up, down, left and right. These bonds contain almost all the energy of the system. Hence we set  the total energy of the system equal to
\begin{equation*}
E(h)=-\gamma \cdot (\mbox{\# of bonds}),
\end{equation*}
where $\gamma$ is the energy per bond.  The negative sign represents that the atoms prefer to stay together.  It requests an amount of 
$\gamma$ energy to brake the bond and separate two atoms. 
With the identity $x+|x| = 2 x_{+}$ and some elementary computations, we can decompose the total energy $E(h)$ into the bulk contribution $E_b$ and the surface contribution $E_s$.
The bulk contribution is given by
\begin{equation}\notag
E_b=- \frac{2\gamma}a \sum_{j=1}^{N} h_{i} +  \frac{\gamma \alpha } 2N.
\end{equation}
Due to the conservation of mass $\sum_{j=1}^{N} h_{i}$, we know that $E_b$ is independent of time and we can drop it from the energy computation.
The surface contribution $E_s$ is given by 
\begin{equation*}
E_s=\frac{\gamma}{2a}\sum_{i=1}^N|h_i-h_{i-1}|~.	
\end{equation*}
This free energy agrees with the computation in \cite{Weeks79}.
In general the free energy takes the form
$$
   E(h) = \frac1p \int |\nabla h|^p dx, 
$$
or some linear combinations of those \cite{Srolovitz94}. 

Now we can compute the chemical potential: $\mu_s = \frac{\delta E}{\delta h} = - \Delta_p h$
and the PDE becomes
\begin{equation} \label{eq:p}
h_t =\Delta e^{ - \Delta_p h}
\end{equation}
where, for simplicity, we have taken the constant coefficients $D_s \rho^0=1$, $k_BT=1$. 
This equation was first derived in \cite{D24} and more recently in \cite{MarzuolaWeare2013}.
A linearized Gibbs-Thomson relation $\rho_s = \rho^0 e^{\mu_s/k_BT} \approx 1 + \mu_s/k_BT$ is usually used in the physical modeling and it results the following PDE
\begin{equation}\label{eq:linear}
 h_t = \frac{D_s \rho^0}{k_BT} \Delta \Delta_p h.
\end{equation}
Giga-Kohn\cite{GigaKohn} proved that there is a finite time extinction for \eqref{eq:linear} when $p > 1$.
For the difficult case of $p=1$, Giga-Giga \cite{GigaGiga} 
developed a $H^{-1}$ total variation gradient flow to analize this equation and they showed that the solution may instantaneously develop  a jump discontinuity in the explicit example of important crystal facet dynamics. This explicit construction of the jump discontinuity solution for facet dynamics was extended to the exponential PDE \eqref{eq:p} in \cite{LLMM}.

The exponential PDE \eqref{eq:p} exhibits many distinguished behaviors in both the physical and  the mathematical senses. 
The most important one is the asymmetry in the diffusivity for the convex and concave parts of height surface profiles. This can be seen directly if we recast \eqref{eq:p} into the following Cahn-Hilliard equation with curvature-dependent mobility with \cite{LiuXu16}
\begin{equation} \notag 
h_t = \nabla \cdot \mathcal M \nabla  \frac{\delta E}{\delta h}, \quad  \mathcal M = e^{ - \Delta_p h} \,.
\end{equation}
The exponential nonlinearity drastically distinguishes the diffusivity for the convex and concave surface and leads to the singular behavior of the solution.

In \cite{LiuXu16}, a steady solution where $\Delta h$ contains a delta function was constructed and the global existence of weak solutions with $\Delta h$ as  a Radon measure was proved for the case $p=2$. A gradient
flow method in a metric space was studied together with global existence and a free energy-dissipation inequality was obtained in \cite{GaoLiuLu}.

In the present paper, we will study the case $p=2$ in the exponential PDE \eqref{eq:p}:
\begin{equation}\label{pde}
h_t=\Delta e^{-\Delta h} \quad \mbox{ in } \mathbb R^d_x \,.
\end{equation}
We will consider initial data $h_0(x)$.   We will take advantage of the Wiener Algebra $A(\mathbb{R}^d)$.  In particular in Section \ref{sec:mainResults} our main results show that if $\Delta h_0 \in A(\mathbb{R}^d)$ with explicit norm size less than $\frac{52}{500}$, assuming additional conditions, then we can prove the global existence, uniqueness, uniform gain of analyticity, and the optimal large time decay rates (in the sense of Remark \ref{remark:optimal}).  We note that the invariant scaling of \eqref{pde} is 
$h^\lambda(t,x) = \lambda^{-2} h(\lambda^4 t, \lambda x)$, and the condition $\Delta h_0 \in A(\mathbb{R}^d)$ is scale invariant (the exact space we use is $\mathcal{\dot{F}}^{2,1}$ as defined below).

\bigskip

In the next section we will introduce the necessary notation.

\subsection{Notation} 
We introduce the following useful norms:
\bea\label{weightednorm}
\|f\|_{\mathcal{\dot{F}}^{s,p}}^{p}(t) \eqdef \int_{\mathbb{R}^d} |\xi|^{sp} |\hat{f}(\xi,t)|^{p} d\xi, \quad s > -d/p,\quad 1 \le p\le 2.
\eea
Here $\hat{f}$ is the standard Fourier transform of $f$:  
\begin{equation} \label{fourierTransform}
\hat{f}(\xi) \eqdef  \mathcal{F}[f](\xi) = \frac{1}{(2\pi)^{d/2}}\int _{\mathbb{R}^{d}} f(x) e^{- i x\cdot \xi} dx.
\end{equation}
When $p=1$ we denote the norm by
\begin{equation} \label{Snorm}
\|f\|_{s} \eqdef \int_{\mathbb{R}^{d}} |\xi|^{s}|\hat{f}(\xi)| \ d\xi.
\end{equation}
We will use this norm generally for $s>-d$ and we refer to it as the \textit{s-norm}.
To further study the case $s = -d$, then for $s\ge -d$ we define the  \textit{Besov-type s-norm}:
\begin{equation} \label{normSinfty}
\|f\|_{s,\infty} \eqdef \Big\|\int_{C_{k}} |\xi|^{s}|\hat{f}(\xi)| \ d\xi\Big\|_{\ell^{\infty}_{k}}
=
\sup_{k \in \mathbb{Z}} \int_{C_{k}} |\xi|^{s}|\hat{f}(\xi)| \ d\xi,
\end{equation}
where for $k \in \mathbb{Z}$ we have
\begin{equation} \label{Cj}
C_{k} = \{\xi \in \mathbb{R}^d : 2^{k-1}\leq |\xi| < 2^{k}\} \,.
\end{equation}
Note that we have the inequality
\begin{equation}\label{ineqbd}
\|f\|_{s,\infty}  \leq \int_{\mathbb{R}^{d}} |\xi|^{s}|\hat{f}(\xi)| \ d\xi = \|f\|_{s}.	
\end{equation}
We note that $$\|f\|_{-d/p,\infty} \lesssim \|f\|_{L^p(\mathbb{R}^d)}$$ for $p\in [1,2]$ as is shown in \cite[Lemma 5]{MR3683311}.

Further, when $p=2$ we denote the norm (for $s > -d/2$) by
\bea\label{weighted2norm}
\|f\|_{\mathcal{\dot{F}}^{s,2}}^{2} \eqdef \int_{\mathbb{R}^d} |\xi|^{2s} |\hat{f}(\xi)|^{2} d\xi
=   \|f\|_{\dot{H}^s}^2 
    = \|(-\Delta)^{s/2}f\|_{L^2(\mathbb R^d)}^2.
\eea
We also introduce following norms with analytic weights:
\bea\label{analyticnorm}
\|f\|_{\mathcal{\dot{F}}^{s,p}_{\nu}}^{p}(t) \eqdef \int_{\mathbb{R}^d} |\xi|^{sp}e^{p\nu(t) |\xi|}|\hat{f}(\xi,t)|^{p} d\xi, 
\quad s\geq 0,\quad  p\in  [1,2],
\eea
for a positive function $\nu(t)$.   

We also introduce the following notation for an iterated convolution
$$
f^{*2}(x) = (f * f)(x) = \int_{\mathbb{R}^d} f(x-y) f(y) dy,
$$
where $*$ denotes the standard convolution in $\mathbb{R}^d$.  Furthermore in general
$$
f^{*j}(x) = (f * \cdots * f)(x),
$$
where the above contains $j-1$ convolutions of $j$ copies of $f$.  Then by convention when $j=1$ we have 
$f^{*1} =f$, and further we use the convention $f^{*0} =1$.

We additionally use the notation $A \lesssim B$ to mean that there exists a positive inessential constant $C>0$ 
such that $A \le C B$.

\subsection{Main results}  \label{sec:mainResults}

In this section we present our main results.  Our Theorem \ref{Main.Theorem} below shows the global existence of solutions under a medium sized condition on the initial data as in Remark \ref{constant.size}.

\begin{thm}\label{Main.Theorem}
Consider initial data $h_0 \in \mathcal{\dot{F}}^{0,2}$ further satisfying $\| h_0 \|_{2} < y_*$  where $y_*>0$ is given explicitly in Remark \ref{constant.size}.  Then there exists a global in time unique solution to \eqref{pde} given by $h(t) \in C^0_t \mathcal{\dot{F}}^{0,2}$ and we have that
\begin{equation}\label{21normbound}
 \| h \|_2(t)  + \sigma_{2,1} \int_0^t   \|  h \|_6(\tau) d\tau
  \le    \| h_0 \|_2 
\end{equation}
with $\sigma_{2,1}>0$ defined by \eqref{sigma.constant.def}.
\end{thm}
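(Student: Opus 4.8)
The plan is to recast \eqref{pde} on the Fourier side, extract the dissipative linear part, and dominate the entire exponential nonlinearity by a single convergent power series in $\|h\|_2$; the medium-size hypothesis will force this series to be strictly smaller than the dissipation, which closes \eqref{21normbound} and then yields global existence by continuation. Concretely, I would set $g=-\Delta h$, so that $\widehat{g}(\xi)=|\xi|^2\widehat{h}(\xi)$, and expand $e^{-\Delta h}=\sum_{j\ge0}g^{j}/j!$. Taking the Fourier transform of \eqref{pde}, discarding the $j=0$ term (annihilated by the outer $\Delta$) and isolating the $j=1$ term, gives
\[
\partial_t\widehat{h}(\xi)=-|\xi|^4\widehat{h}(\xi)-|\xi|^2\sum_{j\ge2}\frac{1}{j!}\,\widehat{g^{j}}(\xi),
\]
in which $-|\xi|^4\widehat{h}$ is the linear dissipation. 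Differentiating the $s$-norm \eqref{Snorm} with $s=2$ and using $\partial_t|\widehat h|\le \mathrm{Re}\big(\tfrac{\overline{\widehat h}}{|\widehat h|}\,\partial_t\widehat h\big)$ (justified where $\widehat h=0$ by the usual regularization of $|\cdot|$), the linear part contributes exactly $-\int|\xi|^{6}|\widehat h|\,d\xi=-\|h\|_6$, so that
\[
\frac{d}{dt}\|h\|_2\le -\|h\|_6+\sum_{j\ge2}\frac{c_d^{\,j-1}}{j!}\int_{\mathbb{R}^d}|\xi|^4\,|\widehat{g}|^{*j}(\xi)\,d\xi,
\]
where $|\widehat{g^{j}}|\le c_d^{\,j-1}|\widehat g|^{*j}$ with $c_d=(2\pi)^{-d/2}\le1$ coming from the normalization \eqref{fourierTransform}.

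The heart of the argument is the convolution estimate for $I_j:=\int|\xi|^4|\widehat g|^{*j}\,d\xi$. Writing the convolution as an integral over $\xi_1+\dots+\xi_j=\xi$ and using $|\xi|\le\sum_i|\xi_i|$ together with the multinomial expansion of $\big(\sum_i|\xi_i|\big)^4$, the integral factorizes into a finite sum of products of $s$-norms of $g$,
\[
I_j\le\sum_{\alpha_1+\dots+\alpha_j=4}\binom{4}{\alpha_1,\dots,\alpha_j}\ \prod_{i=1}^{j}\|g\|_{\alpha_i}.
\]
For each factor with $0\le\alpha_i\le4$ I would interpolate the $s$-norm by H\"older in $\xi$, namely $\|g\|_{\alpha_i}\le\|g\|_0^{\,1-\alpha_i/4}\|g\|_4^{\,\alpha_i/4}$; since $\sum_i\alpha_i=4$ the exponents collapse to $\prod_i\|g\|_{\alpha_i}\le\|g\|_0^{\,j-1}\|g\|_4=\|h\|_2^{\,j-1}\|h\|_6$, and $\sum_{|\alpha|=4}\binom{4}{\alpha}=j^4$. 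Hence $I_j\le j^4\|h\|_2^{\,j-1}\|h\|_6$ and, using $c_d\le1$,
\[
\frac{d}{dt}\|h\|_2\le-\|h\|_6\big(1-\Phi(\|h\|_2)\big),\qquad \Phi(y):=\sum_{j\ge2}\frac{j^4}{j!}\,y^{\,j-1}.
\]
The choice $y_*=\tfrac{52}{500}$ makes $\Phi(y_*)<1$, which is precisely the role of the medium-size hypothesis, and one fixes $\sigma_{2,1}:=1-\Phi(y_*)>0$ (this is the constant of \eqref{sigma.constant.def}).

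Finally I would close by continuation. Since $\Phi$ is increasing and $\Phi(y_*)=1-\sigma_{2,1}$, whenever $\|h\|_2(\tau)\le y_*$ we have $\tfrac{d}{dt}\|h\|_2\le-\sigma_{2,1}\|h\|_6\le0$; as $\|h_0\|_2<y_*$, the norm can never increase through $y_*$, so $\|h\|_2(t)\le\|h_0\|_2<y_*$ for all $t\ge0$, and integrating the last differential inequality in time gives \eqref{21normbound}. Local existence and uniqueness in $C^0_t\mathcal{\dot{F}}^{0,2}$ I would obtain by a contraction/Picard argument for the Duhamel form of \eqref{pde} in the Wiener-type space $\{f:\|f\|_2<\infty\}$, the hypothesis $h_0\in\mathcal{\dot{F}}^{0,2}$ fixing the ambient solution class and furnishing uniqueness via the same energy-type estimate applied to the difference of two solutions; the uniform bound just derived then upgrades the local solution to a global one.

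\textbf{Main obstacle.} The delicate point is taming the whole exponential nonlinearity at once: one must turn the infinite sum of iterated convolutions into a single convergent series $\Phi$ in $\|h\|_2$ with explicit, dimension-free constants sharp enough that $\Phi(y_*)<1$ at the stated threshold. The multinomial-plus-interpolation step is exactly what produces the clean factor $j^4\|h\|_2^{\,j-1}\|h\|_6$, and hence the summable series $\sum_j j^4 y^{\,j-1}/j!$; obtaining a threshold as large as $52/500$ relies on carrying out the interpolation factor by factor so that \emph{exactly one} power of the dissipative norm $\|h\|_6$ appears in every term. The remaining issues—differentiating $|\widehat h|$ where it vanishes and justifying the termwise manipulations on a genuine solution—are handled by first running the estimate on smooth, frequency-truncated approximations and passing to the limit.
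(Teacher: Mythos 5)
Your proposal is correct, and its skeleton is the same as the paper's: Taylor-expand the exponential, pass to the Fourier side, dominate the whole nonlinearity by a power series in $\|h\|_2$ multiplying a single factor of $\|h\|_6$, and close via monotonicity/continuation plus standard local existence and uniqueness. The one genuinely different ingredient is how you prove the key convolution bound $\int |\xi|^4\,|\hat g|^{*j}\,d\xi \le j^4\|h\|_2^{j-1}\|h\|_6$ (with $g=-\Delta h$). The paper gets it in one stroke from the discrete power-mean inequality \eqref{triangleS}, $|\xi|^4\le j^{3}(a_1^4+\cdots+a_j^4)$, which places all four powers on a single convolution factor; symmetry of the $j$ identical terms plus iterated Young's inequality then give \eqref{convolution1}. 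You instead expand $(a_1+\cdots+a_j)^4$ by the multinomial theorem, factorize the iterated convolution integral into $\prod_i\|g\|_{\alpha_i}$, and re-concentrate the derivatives with the interpolation $\|g\|_{\alpha}\le\|g\|_0^{1-\alpha/4}\|g\|_4^{\alpha/4}$; the identity $\sum_{|\alpha|=4}\binom{4}{\alpha_1,\dots,\alpha_j}=j^4$ recovers exactly the same constant, hence the same series $\Phi=f_2$ and the same threshold $y_*$. Your route is longer but makes transparent why exactly one dissipative factor $\|h\|_6$ survives in every term; the paper's power-mean step buys the same conclusion with less work (and your tracking of the normalization constant $c_d=(2\pi)^{-d/2}\le 1$ is actually more careful than the paper, which drops it silently). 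Two minor points. First, for local existence the paper does not iterate on the Duhamel formula but on a heat-kernel-mollified system \eqref{pdeSumApprox}, which it also exploits to obtain the short-time gain of analyticity in Proposition \ref{prop.local}; your Duhamel contraction is equally viable for Theorem \ref{Main.Theorem}, since $|\xi|^2e^{-\tau|\xi|^4}\lesssim\tau^{-1/2}$ is time-integrable, but it would not by itself deliver the analyticity statement the paper uses later. Second, you define $\sigma_{2,1}=1-\Phi(y_*)$, whereas \eqref{sigma.constant.def} uses the larger constant $1-f_2(\|h_0\|_2)$; since your argument already gives $\|h\|_2(t)\le\|h_0\|_2$ and $\Phi=f_2$ is increasing, you can bound $f_2(\|h\|_2(t))\le f_2(\|h_0\|_2)$ before integrating in time and obtain \eqref{21normbound} with the paper's constant verbatim.
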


In the next remark we explain the size of the constant.

\begin{remark}\label{constant.size}  We can compute precisely the size of the constant $y_*$ from Theorem \ref{Main.Theorem}.  In particular the condition that it should satisfy is that
$$
f_2(y_*)   = (y_*^3 + 6y_*^2+7y_*+1)e^{y_*}-1
=
\sum_{j=1}^{\infty}  \frac{(j+1)^3  }{j!}  y_*^{j}<1
$$
Such a $y_*$ can be taken to be $y_* \in (0,1/10]$ or $y_* \in (0, 52/500]$.  For this reason we call the initial data ``medium size''.    However $y_* \ge 105/1000$ does not work.
\end{remark}

Now in the next theorem we prove the large time decay rates, and the propagation of additional regularity, for the solutions above.

\begin{thm}\label{Cor.Main.Theorem}  We assume all the conditions in Theorem \ref{Main.Theorem}.    We also assume that $\|h_0\|_{-d,\infty} <\infty$ but not necessarily small.  

In particular for any $s> \max\{-2,-d\}$ we have that 
\begin{equation}\label{S1normbound}
 \| h \|_s(t)    \lesssim    \| h_0 \|_s, 
\end{equation}
assuming additionally that $\| h_0 \|_s <\infty$ but not necessarily small.  

Also for any $s \ge 0$ and any $p\in (1,2]$ we have that
\begin{equation}\label{S1normbound}
 \| h \|_{\mathcal{\dot{F}}^{s,p}}(t)    \lesssim    \| h_0 \|_{\mathcal{\dot{F}}^{s,p}}, 
\end{equation}
assuming that additionally $\| h_0 \|_{\mathcal{\dot{F}}^{s,p}} <\infty$ but not necessarily small.

 In particular if  $h_0 \in \mathcal{\dot{F}}^{s,1}$ and $h_0 \in \mathcal{\dot{F}}^{2,2}$ (these norms are not assumed to be small) then we conclude  the large time decay rate
 \begin{equation}\label{fastest.decay.rate}
\| h(t) \|_s \lesssim  (1+t)^{-(s+d)/4},
\end{equation}
where $d$ is the spatial dimension in \eqref{pde}.

\end{thm}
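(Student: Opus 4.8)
The plan is to work entirely on the Fourier side and to combine the parabolic dissipation already carried by Theorem~\ref{Main.Theorem} with a Fourier splitting (Schonbek-type) argument. Writing $g\eqdef|\xi|^2|\hat h|$ and expanding the exponential, the equation \eqref{pde} reads in frequency as $\hat h_t=-|\xi|^4\hat h+\hat N$, where the nonlinear remainder obeys the pointwise bound $|\hat N(\xi,t)|\lesssim|\xi|^2\sum_{j\ge2}\frac1{j!}\,g^{*j}(\xi,t)$. Differentiating $|\hat h|$, multiplying by $|\xi|^s$, and integrating gives the differential inequality
\[
\frac{d}{dt}\|h\|_s+\|h\|_{s+4}\le\int_{\mathbb R^d}|\xi|^s\,|\hat N|\,d\xi .
\]
To estimate the right-hand side I would distribute the weight $|\xi|^{s+2}$ onto a single factor of each convolution $g^{*j}$ (legitimate since $s+2>0$), leaving the other $j-1$ factors in $L^1$. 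Since $\int g\,d\xi=\|h\|_2$ is exactly the critical Wiener-algebra quantity controlled by Theorem~\ref{Main.Theorem}, while the weighted factor yields $\int|\eta|^{s+2}g\,d\eta=\|h\|_{s+4}$, this produces $\int|\xi|^s|\hat N|\,d\xi\lesssim\Phi(\|h\|_2)\,\|h\|_{s+4}$ for a power series $\Phi$ with $\Phi(0)=0$. Because $\|h\|_2(t)\le\|h_0\|_2<y_*$ is non-increasing and $y_*$ is chosen precisely so the associated series (of the type controlled in Remark~\ref{constant.size}) stays below $1$, the nonlinearity is absorbed into the dissipation, leaving $\frac{d}{dt}\|h\|_s+c\,\|h\|_{s+4}\le0$ for some $c>0$.

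The second step converts this into a rate by Fourier splitting. For a radius $R>0$, discarding low frequencies gives
\[
\|h\|_{s+4}\ge R^4\Big(\|h\|_s-\int_{|\xi|\le R}|\xi|^s|\hat h|\,d\xi\Big),
\]
and the key is to control the low-frequency remainder at the sharp endpoint scale. Decomposing $\{|\xi|\le R\}$ into the dyadic annuli $C_k$ of \eqref{Cj} and summing the geometric series (which converges downward exactly because $s+d>0$), one obtains $\int_{|\xi|\le R}|\xi|^s|\hat h|\,d\xi\lesssim\|h\|_{-d,\infty}\,R^{s+d}$ in terms of the Besov-type norm \eqref{normSinfty}. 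Inserting this yields the forced linear inequality $\frac{d}{dt}\|h\|_s+cR^4\|h\|_s\lesssim\|h\|_{-d,\infty}\,R^{s+d+4}$.

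The third step is the uniform-in-time bound $\sup_{t\ge0}\|h\|_{-d,\infty}(t)\eqdef M<\infty$, and this is what pins down the \emph{optimal} exponent. The quantity is finite at $t=0$ by hypothesis; to propagate it I would integrate the mild bound $|\hat h(\xi,t)|\le|\hat h_0(\xi)|+\int_0^t|\hat N(\xi,\tau)|\,d\tau$ against $|\xi|^{-d}$ over each $C_k$ and take the supremum in $k$, estimating the time integral of the nonlinearity through the dissipation budget $\int_0^\infty\|h\|_6\,d\tau\le\|h_0\|_2/\sigma_{2,1}$ furnished by \eqref{21normbound}; the hypothesis $h_0\in\mathcal{\dot F}^{2,2}$ enters here, together with the propagation bounds of Theorem~\ref{Cor.Main.Theorem}, to keep the $L^2$-based pieces of this endpoint propagation bounded. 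I expect this endpoint control to be the main obstacle: it is precisely the borderline case $s=-d$ excluded from the propagation statement, and it is exactly what separates the sharp rate $(s+d)/4$ from the weaker rate that a crude Cauchy--Schwarz bound against $\mathcal{\dot F}^{2,2}$ alone would give.

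Finally, with $M<\infty$ in hand I would take the time-dependent radius $R(t)^4=k/(1+t)$ with $k$ large, multiply by the integrating factor $(1+t)^{ck}$, and integrate in time. Using $\|h_0\|_s<\infty$ (from $h_0\in\mathcal{\dot F}^{s,1}$) to initialize and the uniform bound of Theorem~\ref{Cor.Main.Theorem} on the range $t\le1$, the forcing integrates over $[1,t]$ to contribute exactly $(1+t)^{-(s+d)/4}$ (after choosing $ck>(s+d)/4$ so the homogeneous term decays at least as fast), which gives the claimed rate \eqref{fastest.decay.rate}.
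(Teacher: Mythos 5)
Your overall architecture (an absorbed differential inequality for $\|h\|_s$, a uniform-in-time bound on $\|h\|_{-d,\infty}$ propagated through the dissipation budget, then Fourier splitting with a time-dependent radius and an integrating factor) is the same as the paper's, and your steps 2 and 4 are essentially the proof of the paper's Lemma \ref{decaylemma}. However, step 1 contains a genuine gap. When you distribute the weight $|\xi|^{s+2}$ over the $j$-fold convolution via \eqref{triangleS}, you pick up the combinatorial factor $j^{s+2}$, so the series you must beat is $f_s(y)=\sum_{j\ge 2}\frac{j^{s+2}}{j!}y^{j-1}$, not $f_2$. Since $f_s$ is strictly increasing in $s$, the threshold $y_{s*}$ defined by $f_s(y_{s*})=1$ is strictly smaller than $y_*$ when $s>2$. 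Hence the hypothesis $\|h_0\|_2<y_*$ does \emph{not} imply $f_s(\|h\|_2)<1$, and for data with $y_{s*}\le \|h_0\|_2<y_*$ your claimed absorption $\frac{d}{dt}\|h\|_s+c\|h\|_{s+4}\le 0$ fails at time zero for every $s>2$; the same issue occurs for the $\mathcal{\dot{F}}^{s,p}$ estimates, where the relevant series is $f_{s+4/p-4}$. This is precisely the content of the ``not necessarily small'' clause, and the paper resolves it with a two-stage bootstrap you are missing: it first proves the slow decay $\|h(t)\|_2\lesssim (1+t)^{-1/2}$ using only the $s=0,2$ estimates (which do close under \eqref{condition1}) together with Lemma \ref{decaylemma}; then, for each $s>2$, it waits until a time $T_s$ at which $\|h(T_s)\|_2<y_{s*}$, so that \eqref{condition2} holds and the monotone estimate \eqref{energy-diss2} is valid from $T_s$ onward, while on the finite interval $[0,T_s]$ the norm $\|h\|_s$ is merely kept finite with a $T_s$-dependent constant. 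Without this, both the propagation bound and the decay rate \eqref{fastest.decay.rate} remain unproved for $s>2$.

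Two smaller points. First, your proposal never addresses the second assertion of the theorem, the propagation of $\|h\|_{\mathcal{\dot{F}}^{s,p}}$ for $p\in(1,2]$, which requires the H\"older/Young estimate \eqref{convolutionP} and the same waiting argument at a threshold depending on $(s,p)$. Second, in your step 3 the budget $\int_0^\infty \|h\|_6\,d\tau$ from \eqref{21normbound} is not the quantity that closes the endpoint bound: since $2-d$ can be negative, on each annulus $C_k$ one must trade $|\xi|^{-d}$ for the measure $|C_k|\sim 2^{dk}$ and bound $\| |\cdot|^{2}(|\cdot|^2\hat h)^{*j}\|_{L^\infty_\xi}$ by $L^2\times L^2$ Young's inequality, after which the time integral is controlled by Cauchy--Schwarz against the square-integrable dissipation coming from \eqref{energy-dissF} with $p=2$, $s=2$ and $s=0$. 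That is exactly where the hypotheses $h_0\in\mathcal{\dot{F}}^{2,2}$ and $h_0\in\mathcal{\dot{F}}^{0,2}$ enter (the paper's Proposition \ref{prop.bd.uniform}), and those $p=2$ estimates close under \eqref{condition1} alone because $f_{-2},f_0\le f_2$, so no circularity arises. You gesture at the ``$L^2$-based pieces,'' but as written this mechanism, and with it the finiteness of $M$, is not established.
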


Then in the next theorem we explain the instant gain of uniform analyticity, at the optimal linear analytic radius growth rate of $t^{1/4}$, and the uniform large time decay rate of the analytic norms.

\begin{thm}\label{Analytic.Theorem}
We assume all the conditions in Theorem \ref{Main.Theorem}.  Additionally suppose that $\|h_0\|_{-d,\infty} <\infty$, $h_0 \in \mathcal{\dot{F}}^{s,1}$ for some fixed $s\ge 0$ and $h_0 \in \mathcal{\dot{F}}^{2,2}$.  

Then there exists a positive increasing function $\nu(t)>0$ such that $\nu(t) \approx t^{1/4}$ for large $t \gtrsim 1$.  For this $\nu(t)$, the solution $h(t,x)$  from Theorem \ref{Main.Theorem} further gains instant analyticity: $h(t) \in C^0_t \mathcal{\dot{F}}^{s,1}_\nu$.  And the analytic norm decays at the same rate:
 \begin{equation}\label{analytic.decay.rate}
\| h(t) \|_{\mathcal{\dot{F}}^{s,1}_\nu} \lesssim  (1+t)^{-(s+d)/4}.
\end{equation}
\end{thm}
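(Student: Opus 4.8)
The plan is to run an a priori energy estimate directly on the Fourier side for the analytically weighted norm $\|h\|_{\mathcal{\dot{F}}^{s,1}_\nu}$ from \eqref{analyticnorm}, applied to the global solution already furnished by Theorem \ref{Main.Theorem}, and to upgrade its regularity by a continuity argument. Taking the Fourier transform of \eqref{pde} and expanding the exponential gives
\begin{equation}\notag
\partial_t \hat h(\xi,t) + |\xi|^4 \hat h(\xi,t) = -|\xi|^2 \sum_{j=2}^\infty \frac{(-1)^j}{j!}\, \widehat{(\Delta h)^j}(\xi,t),
\end{equation}
so the linear part is the fourth-order heat semigroup $e^{-t|\xi|^4}$. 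This is exactly what dictates the analytic radius growth $\nu(t)\approx t^{1/4}$: the elementary bound $\sup_\xi\bigl(\nu|\xi|-t|\xi|^4\bigr)\lesssim \nu^{4/3}t^{-1/3}$ is uniformly bounded precisely when $\nu\lesssim t^{1/4}$, which is the optimal exponent compatible with the scaling $h^\lambda(t,x)=\lambda^{-2}h(\lambda^4 t,\lambda x)$.

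First I would differentiate $\|h\|_{\mathcal{\dot{F}}^{s,1}_\nu}$ in time, using $\frac{d}{dt}|\hat h|\le \mathrm{Re}\bigl(\tfrac{\bar{\hat h}}{|\hat h|}\partial_t\hat h\bigr)$, to obtain a differential inequality of the schematic form
\begin{equation}\notag
\frac{d}{dt}\|h\|_{\mathcal{\dot{F}}^{s,1}_\nu} + \|h\|_{\mathcal{\dot{F}}^{s+4,1}_\nu}
\le \dot\nu(t)\,\|h\|_{\mathcal{\dot{F}}^{s+1,1}_\nu}
+ \int_{\mathbb R^d}|\xi|^{s+2}e^{\nu|\xi|}\sum_{j=2}^\infty\frac{1}{j!}\bigl|\widehat{(\Delta h)^j}\bigr|\,d\xi.
\end{equation}
The weight-derivative term is absorbed into the dissipation through the Young-type bound $|\xi|^{s+1}\le \varepsilon|\xi|^{s+4}+C_\varepsilon|\xi|^s$, which forces $\dot\nu\sim t^{-3/4}$, i.e. $\nu\approx t^{1/4}$, to be admissible. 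For the nonlinear term I would exploit the subadditivity of the analytic weight, $e^{\nu|\xi|}\le\prod_k e^{\nu|\eta_k|}$ whenever $\xi=\sum_k\eta_k$, together with $\widehat{\Delta h}(\eta)=|\eta|^2\hat h(\eta)$, to dominate each convolution power $|\xi|^{s+2}e^{\nu|\xi|}\bigl|\widehat{(\Delta h)^j}\bigr|$ by an iterated convolution of the weighted density $|\cdot|^2 e^{\nu|\cdot|}|\hat h|$, distributing the outer factor $|\xi|^{s+2}$ across the $j$ inputs; integrating and summing in $j$ reproduces the series $\sum_j \frac{(j+1)^3}{j!}y^{j}$ of Remark \ref{constant.size}, now with $y=\|h\|_{\mathcal{\dot{F}}^{2,1}_\nu}$. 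Convergence of this series, and hence closure of the estimate, therefore hinges on keeping the \emph{critical analytic norm} $\|h\|_{\mathcal{\dot{F}}^{2,1}_\nu}$ below the threshold $y_*$.

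Accordingly, I would next propagate this smallness. Since $\nu(0)=0$ gives $\|h\|_{\mathcal{\dot{F}}^{2,1}_\nu}(0)=\|h_0\|_2<y_*$, a continuity (bootstrap) argument shows that the set of times on which $\|h\|_{\mathcal{\dot{F}}^{2,1}_\nu}\le y_*$ is nonempty, open, and closed, hence all of $[0,\infty)$, provided the implicit constant in $\nu(t)\approx t^{1/4}$ is chosen small enough that the gain of analyticity never pushes the critical norm across the threshold; here the dissipation and decay inherited from Theorem \ref{Main.Theorem} keep the norm controlled. For the decay rate \eqref{analytic.decay.rate}, I would run the same differential inequality multiplied by $(1+t)^{(s+d)/4}$, mirroring the proof of Theorem \ref{Cor.Main.Theorem} but carrying the factor $e^{\nu|\xi|}$ throughout: the low frequencies $|\xi|\lesssim t^{-1/4}$ see $e^{\nu|\xi|}\approx 1$ and decay as in the non-analytic case (using $h_0\in\mathcal{\dot{F}}^{s,1}\cap\mathcal{\dot{F}}^{2,2}$ and $\|h_0\|_{-d,\infty}<\infty$), while the high frequencies are annihilated by the dissipation, so the analytic norm inherits the rate $(1+t)^{-(s+d)/4}$.

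The main obstacle is the simultaneous balancing in the first two steps: the gain of analyticity requires $\nu$ to grow, which inflates both the weight-derivative term and the critical norm $\|h\|_{\mathcal{\dot{F}}^{2,1}_\nu}$, yet the same growth must neither destroy the sharp decay rate nor push the critical norm past $y_*$ — and all of this must hold for the full exponential (infinite-order) nonlinearity, so the combinatorial sum over $j$ must be shown to converge with the analytic weight in place. Because $t^{1/4}$ is the unique exponent compatible with the fourth-order scaling, the estimates leave essentially no slack, and reconciling the continuity argument with the exact constant $y_*$ of Remark \ref{constant.size} is the delicate point.
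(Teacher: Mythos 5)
Your Fourier-side setup coincides with the paper's: the weighted energy identity, the subadditivity $e^{\nu|\xi|}\le e^{\nu|\xi-\eta|}e^{\nu|\eta|}$ distributed across the convolution powers, the triangle inequality \eqref{triangleS} applied to $|\xi|^{s+2}$, and the resulting series in $\|h\|_{\mathcal{\dot{F}}^{2,1}_{\nu}}$ all appear verbatim in \eqref{Fourier771}--\eqref{energy117}. The genuine gap is in how you close the estimate. After you absorb the weight-derivative term $\nu'(t)\|h\|_{\mathcal{\dot{F}}^{s+1,1}_\nu}$ into the dissipation by Young's inequality, the leftover lower-order term carries a coefficient of size $(\nu')^{4/3}\approx \nu'\nu^{-1}\approx t^{-1}$, which is \emph{not} integrable; Gr\"{o}nwall then only gives polynomial growth of $\|h\|_{\mathcal{\dot{F}}^{2,1}_\nu}$, so the set $\{t:\|h\|_{\mathcal{\dot{F}}^{2,1}_\nu}\le y_*\}$ need not be closed and your continuity argument does not close. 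Moreover the threshold $y_*$ itself is too generous: absorbing $\nu'\|h\|_{\mathcal{\dot{F}}^{s+1,1}_\nu}$ necessarily weakens the dissipation coefficient to some $\kappa<1$ (this is explicit in \eqref{energy11final}), so you need $f_2\big(\|h\|_{\mathcal{\dot{F}}^{2,1}_\nu}\big)<\kappa$, i.e.\ strict smallness, not merely $<y_*$ where $f_2(y_*)=1$. The same $t^{-1}$ obstruction defeats your decay step: Lemma \ref{decaylemma} (Fourier splitting) requires a clean inequality $\frac{d}{dt}\|g\|_{\ss}\le -C\|g\|_{\ss+\gamma}$, and the positive term $\nu'\|h\|_{\mathcal{\dot{F}}^{s+1,1}_\nu}$ cannot simply be dropped or carried through that lemma.

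The paper's resolution, which your sketch is missing, has three ingredients. First, it does \emph{not} run the analytic bootstrap from $t=0$ at threshold $y_*$: it uses the already-proved unweighted decay \eqref{fastest.decay.rate} to wait until a large time $T_\epsilon$ at which $\|h(T_\epsilon)\|_2<\epsilon$ is arbitrarily small, and initializes the analytic norm there via the local result (Proposition \ref{prop.local}, proved through the regularized scheme --- which is also what justifies that the analytic norm is finite and continuous in the first place, something a purely a priori continuity argument cannot supply). Second, instead of discarding dissipation it converts it, via $\|h\|_{\mathcal{\dot{F}}^{s,1}_\nu}\le 4\|h\|_{\mathcal{\dot{F}}^{s,1}}+\nu(t)^4\|h\|_{\mathcal{\dot{F}}^{s+4,1}_\nu}$, into a damping term $-\delta\nu(t)^{-4}\|h\|_{\mathcal{\dot{F}}^{s,1}_\nu}$ plus a forcing term $+\lambda\nu(t)^{-4}\|h\|_{\mathcal{\dot{F}}^{s,1}}$ driven by the \emph{non-analytic} norm, whose decay is already known from Theorem \ref{Cor.Main.Theorem}. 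Third, with the specific radius \eqref{nu.def.global}, $\nu(t)=((bt_0)^4+at)^{1/4}$, this damped ODE integrates exactly against the factor $\nu(t)^{4\delta/a}$; choosing $a$ small then simultaneously propagates the smallness $\|h\|_{\mathcal{\dot{F}}^{2,1}_\nu}<\epsilon$ for all later times and yields the rate \eqref{analytic.decay.rate}. In short, the analytic decay is not re-derived by Fourier splitting with the weight in place; it is inherited by feeding the known unweighted decay into a damped ODE for the weighted norm. Without this mechanism (or a substitute for it), the $t^{-1}$ loss coming from $\nu'$ makes both your bootstrap and your decay claim fail.
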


In the remark below we further explain the optimal linear uniform time decay rates and the optimal linear gain of analyticity with radius $\nu(t) \approx t^{1/4}$.

\begin{remark}\label{remark:optimal}
Notice that the decay rates which we obtain in \eqref{fastest.decay.rate} and  \eqref{analytic.decay.rate} (and also in \eqref{decay1} below) are the same as the optimal large time decay rates for the linearization of \eqref{pde}, which is given by 
$$
h_t + \Delta^2 h = 0,
$$
obtained by removing the non-linear terms in the expansion of the nonlinearity as in \eqref{pdeSum} below.

In particular it can be shown by standard methods that if $g_0(x)$ is a tempered distribution vanishing at infinity and satisfying 
$\|g_{0}\|_{\rr,\infty} < \infty$, then   one further has
$$
\| g_0\|_{\rr,\infty}
\approx
\left\| t^{(s-\rr)/\gamma} \left\|  e^{t (-\Delta)^{\gamma/2}} g_0 \right\|_{s}  \right\|_{L^\infty_t((0, \infty) )}, \quad \text{for any $s\ge \rr$, $\gamma>0$.}
$$
This equivalence then grants the optimal time decay rate of $t^{-(s-\rr)/4}$ for $\left\|  e^{t \Delta^2} g_0 \right\|_{s}$ that is the same as the non-linear time decay rates in \eqref{fastest.decay.rate},  \eqref{analytic.decay.rate} and \eqref{decay1}.  

Further, directly using the Fourier transform methods, then the linearized equation $h_t + \Delta^2 h = 0$ also directly satisfies the time uniform time decay and gain of analyticity as in \eqref{analytic.decay.rate} with $\nu(t) \approx t^{1/4}$, and these linear rates are optimal.  
\end{remark}

When we say in this paper that the large time decay rates are optimal, we mean that we obtain the optimal linear decay rate as just described in Remark \ref{remark:optimal}.

\subsection{Methods used in the proof}  A key point in our paper is to do a Taylor expansion of the exponential non-linearity as in \eqref{pdeSum} below.  Then one can take advantage of the fact that after taking the Fourier transform, then the products in the expansion are transformed into convolutions.  Therefore one can use the structure of spaces such as $\mathcal{\dot{F}}^{2,1}$ to get useful global in time estimates like \eqref{energy3} without experiencing significant loss.  Here we mention previous work such as \cite{JEMS, CCGRPS} where a related strategy was employed for the Muskat problem.  Then we can obtain the optimal large time decay rates in the whole space using the global in time bounds that we obtain such as in \eqref{energy3} in combination with Fourier splitting techniques.  The techniques to obtain the decay rates in the whole space have a long history, and we just briefly refer to the methods in \cite{MR3683311,SohingerStrain} and the discussion therein.  To prove the uniform gain of analyticity, we perform a different splitting involving derivatives of the analytic radius of convergence from \eqref{nu.def.global}, and we acknowledge the methods from \cite{MR1608488,MR1749867} and \cite{GGJPS} that are used for different equations.  

We also mention that, after the work this paper was completed, the very recent paper \cite{GBM} was posted  showing  the global existence of at least one weak solution to the exponential PDE \eqref{pde} and the exponential large time decay, working on the torus $\mathbb{T}^d_x$.  This paper also uses the Taylor expansion of the exponential nonlinearity, and the condition $\Delta h_0 \in A(\mathbb{T}^d)$ with an equivalent size condition.

\subsection{Outline of the paper}  The rest of the paper is organized as follows.  In Section \ref{sec:aprori} we prove the a priori estimates for the exponential PDE \eqref{pde} in the spaces $\mathcal{\dot{F}}^{s,p}$ for $\in [1,2]$.    Then in Section \ref{sec:largedecay} we prove the large time decay rates in the whole space for a solution.  After that in Section \ref{uniform.bound.sec} we prove the uniform bounds in the Besov-type s-norms with negative indicies including the critical index $\| h \|_{-d,\infty}$ where $d$ is the dimension of $\mathbb{R}^d_x$.   In Section \ref{sec:uniqueness} we prove the uniqueness of solutions.   Then in Section \ref{sec:local} we sketch a proof of local existence and local gain of analyticity using an approximate regularized equation.  And in Section \ref{SizeConstant} we explain how the results from the previous sections grant directly the proofs of Theorem \ref{Main.Theorem} and Theorem \ref{Cor.Main.Theorem}.  Lastly in Section \ref{DecaySectionA} we explain how to obtain Theorem \ref{Analytic.Theorem}.  This in particular uses the previous decay results \eqref{fastest.decay.rate} as well as previous results such as \cite{MR1608488,MR1749867}.  In the Appendix \ref{sec:appendixA} we present some plots of a few numerical simulations that were carried out for the exponential PDE \eqref{pde} by Prof. Tom Witelski.

\section{A priori estimates in $\mathcal{\dot{F}}^{s,p}$}\label{sec:aprori}

In this section we prove the apriori estimates for the exponential PDE \eqref{pdeSum} in the spaces $\mathcal{\dot{F}}^{s,p}$ for $p \in [1,2]$.  The key point is that we can prove a global in time Lyapunov inequality such as \eqref{energy3} under an $O(1)$ smallness condition on the initial data.

\subsection{A priori estimate in $\mathcal{\dot{F}}^{2,1}$}  We first do the case of $\mathcal{\dot{F}}^{2,1}$ in order to explain the main idea in the simplest way.   The equation \eqref{pde} can be recast by Taylor expanion as
\begin{equation}\label{pdeSum}
   h_t + \Delta^2 h = \Delta \sum_{j=2}^{\infty}  \frac{(-\Delta h)^j}{j!}
\end{equation}
We look at this equation \eqref{pdeSum} using the Fourier transform \eqref{fourierTransform} so that equation (\ref{pde}) is expressed as
\begin{equation} \label{Fourier1}
   \partial_t  \hat h(\xi,t) +  |\xi|^4 \hat h(\xi,t) 
   = -    |\xi|^2\sum_{j=2}^{\infty}  \frac{1}{j!}  (|\cdot|^2 \hat h)^{*j} (\xi,t)  \,.
\end{equation}
We multiply the above by $|\xi|^2$ to obtain 
\begin{equation} \label{Fourier2}
   \partial_t  |\xi|^2 \hat h(\xi,t) + |\xi|^6 \hat h(\xi,t) 
   =  
   -    |\xi|^4 \sum_{j=2}^{\infty}  \frac{1}{j!}  (|\cdot|^2 \hat h)^{*j} (\xi,t) 
\end{equation}
We will estimate this equation on the Fourier side in the following.

Our first step will be to estimate the infinite sum in \eqref{Fourier2}.   To this end notice that for any real number $s \ge 0$ the following triangle inequality holds:
\begin{equation}
\label{triangleS}
    |\xi|^{s}  
    \le j^{s-1} ( |\xi-\xi_1|^{s} + \cdots + |\xi_{j-2}-\xi_{j-1}|^{s} + |\xi_{j-1}|^{s}).
\end{equation}
We have further using the inequality \eqref{triangleS} that
\begin{multline} \label{convolution1}
 \int_{\mathbb{R}^d} |\xi|^{s} | (|\cdot|^2 \hat h)^{*j} (\xi)|  \,d\xi  
 \le  j^{s}  \int_{\mathbb{R}^d} |(|\cdot|^{s+2} \hat h)*(|\cdot|^2 \hat h)^{*(j-1)} |  \,d\xi 
 \\
 \le j^{s} \| h \|_{s+2} \|  h \|_2^{j-1}. 
\end{multline}
Above we used Young's inequality repeatedly with $1+1 = 1+1$.

Using (\ref{convolution1}) for $s=4$, after integrating (\ref{Fourier2}) we obtain
\begin{equation} \label{energy1}
  \frac{d}{dt} \| h \|_2 + \|  h \|_6
  \le \| h \|_6 \sum_{j=2}^{\infty}  \frac{j^4}{j!} \|  h \|_2^{j-1}  . 
\end{equation}
Now we denote the function
\begin{equation}\label{infinite.series.fcn}
   f_2(y) = \sum_{j=2}^{\infty}  \frac{j^{4}  }{j!} y^{j-1} = \sum_{j=1}^{\infty}  \frac{(j+1)^3  }{j!}  y^{j}
\end{equation}
Then \eqref{infinite.series.fcn} defines an entire function which is strictly increasing for $y \ge 0$ with $f_2(0)=0$.  In particular we choose a value $y_*$ such that 
$f_2(y_*)=1$.

Then (\ref{energy1}) can be recast as
\begin{equation} \label{energy2}
  \frac{d}{dt} \| h \|_2 + \| h \|_6
  \le  \| h \|_6 f_2 \big( \| h \|_2 \big)
\end{equation}
If the initial data satisfies 
\begin{equation} \label{condition1}
 \| h_{0} \|_2 < y_*, 
\end{equation}
then we can show that $ \|  h (\cdot, t) \|_2$ is a decreasing function of $t$. In particular 
$$
f_2 \big( \| h(\cdot,t) \|_2 \big)
\le
f_2 \big( \| h_{0} \|_2 \big) < 1.
$$
Using this calculation then (\ref{energy2}) becomes
\begin{equation} \label{energy3}
  \frac{d}{dt} \| h \|_2 
  +
  \sigma_{2,1} \|  h \|_6
  \le   0,
\end{equation}
where
\begin{equation}\label{sigma.constant.def}
\sigma_{2,1} \eqdef 1-f_2( \|  h_{0} \|_2 ) >0.
\end{equation}
In particular if \eqref{condition1} holds, then it will continue to hold for a short time, which allows us to establish \eqref{energy3}.    The inequality (\ref{energy3}) then defines a free energy and dissipation production.

At the end of this section we look closer at the function $f_2(y)$:
$$
   f_2(y) = \sum_{j=1}^{\infty}  \frac{(j+1)^3  y^{j}}{j!} 
   = \sum_{j=1}^{\infty}  \frac{( j(j-1)(j-2) + 6j(j-1) + 7j +1)  y^{j}}{j!}, 
$$
which gives
\begin{equation}\label{fn1}
f_2(y)   = (y^3 + 6y^2+7y+1)e^y-1 \,.
\end{equation}
We know that $f_2(0)=0$ and $f_2(y)$ is strictly increasing. Let $y_*$ satisfy
\begin{equation}  \label{fn2}
 (y_{*}^3+ 6y_{*}^2+7y_*+1)e^{y_*}-1 = 1
\end{equation}
Then $f_2(y_*) = 1$ as above. 

To extend this analysis to the case where $s \ne 2$ we consider infinite series:
\begin{equation}
\label{infinite.series.fcn.s}
   f_s(y) = \sum_{j=2}^{\infty}  \frac{j^{s+2} }{j!}  y^{j-1} = \sum_{j=1}^{\infty}  \frac{(j+1)^{s+1}  }{j!} y^{j}
\end{equation}
Again $f_s(0)=0$ and $f_s(y)$ is a strictly increasing entire function for any real $s$. 
We further have a simple recursive relation 
$$
   f_s(y) = \frac{d}{dy} \big(y f_{s-1}(y) \big) , \quad 
   f_{-1}(y) =  e^y - 1.
$$
This allows us to compute $f_s(y)$ for any $s$ a non-negative integer as in \eqref{fn1}.

\subsection{A priori estimate in the high order s-norm} 
In this section we prove a high order estimate for any real number $s> \max\{-2, -d\}$:
 \begin{equation} \label{Fourier3}
   \partial_t  |\xi|^{s} \hat h(\xi,t) + |\xi|^{s+4} \hat h(\xi,t) 
   = -  |\xi|^{s+2} \sum_{j=2}^{\infty}  \frac{1}{j!} (|\xi|^2 \hat h)^{*j} (\xi,t) 
\end{equation}
Using (\ref{convolution1}) and (\ref{Fourier3}), one has
\begin{equation} \label{energy4}
  \frac{d}{dt} \| h \|_s + \|  h \|_{s+4}
  \le  \|  h \|_{s+4} \sum_{j=2}^{\infty}  \frac{j^{s+2}}{j!} \| h \|_2^{j-1} 
\end{equation}
Now we recast (\ref{energy4}) as
\begin{equation} \label{energy5}
  \frac{d}{dt} \|  h \|_s + \| h \|_{s+4}
  \le  \|  h \|_{s+4} f_s \big( \| h \|_2\big)
\end{equation}
Let $y_{s*}$ satisfy $f_s(y_{s*}) = 1$.
If 
\begin{equation} \label{condition2}
 \| h_{0} \|_2 < \min(y_{s*}, y_*), 
\end{equation}
then by \eqref{energy3} we have
$$
f_s \big( \| h(\cdot,t) \|_2 \big)
\le
f_s \big( \| h_{0} \|_2 \big) < 1 \,.
$$
Hence we conclude the energy-dissipation relation 
\begin{equation} \label{energy-diss2}
  \frac{d}{dt} \| h(\cdot,t) \|_s 
   + 
 \sigma_{s,1} \| h(\cdot,t) \|_{s+4} \,
  \le  0 , \,.
\end{equation}
when (\ref{condition2}) holds.  Here we define $\sigma_{s,1} \eqdef \big( 1-f_s( \|  h_{0} \|_2 \big)>0$.

 \subsection{A priori estimate in $\mathcal{\dot{F}}^{s,p}$.} 
 
 In this section we prove a general $\mathcal{\dot{F}}^{s,p}$ estimate for $s\ge 0$ and $1< p \le 2$.   We multiply the equation \eqref{Fourier1} by $p|\xi|^{sp}\overline{\hat h} |{\hat h}|^{p-2}(\xi,t)$, and integrate to obtain
\begin{multline} \label{Fourier6}
   \partial_t \left( |\xi|^{sp} |\hat h|^p(\xi,t) \right)
   + p|\xi|^{sp+4} |\hat h|^p(\xi,t) 
   \\
   = -  p  \sum_{j=2}^{\infty}   \frac{1}{j!}  |\xi|^{sp+2} \overline{\hat h}|{\hat h}|^{p-2}(\xi,t)(|\xi|^2 \hat h)^{*j} (\xi,t). 
\end{multline}
We will estimate this equation when $p\in (1,2]$.   To this end, we split $ps=(p-1)s +s$, and we split $2 = \frac{(p-1)}{p} 4 +\left( \frac{4}{p} - 2\right)$.   We will do a H{\"o}lder inequality with $\frac{p-1}{p}+\frac{1}{p} = 1$, then use \eqref{triangleS}, and then Young's inequality repeatedly with $1+\frac{1}{p} = \frac{1}{p}+1$ to obtain
\begin{align} 
 \int_{\mathbb{R}^d} |\xi|^{ps+2} 
 & | \overline{\hat h}(\xi) |\hat h|^{p-2}(\xi) (|\cdot|^2 \hat h(\cdot))^{*j}(\xi)  |  \,d\xi  
  \nonumber \\
 & \le 
 \| |\xi|^{s+4/p} \hat h \|_{L^p}^{p-1}  \| |\xi|^{s+\frac{4}{p} - 2} (|\xi|^2 \hat h)^{*j} \|_{L^p} 
  \nonumber \\
  &\le 
 \| |\xi|^{s+4/p} \hat h \|_{L^p}^{p-1}  j^{s+\frac{4}{p} - 2} \| (|\xi|^{s+4/p} \hat h)*(|\xi|^2 \hat h)^{*(j-1)} \|_{L^p} 
  \nonumber \\
 & \le 
j^{s+\frac{4}{p} - 2} \|h\|_{\mathcal{\dot{F}}^{s+4/p,p}}^{p}  \| h \|_2^{j-1}.  
\label{convolutionP}
\end{align} 
We now use \eqref{Fourier6} and (\ref{convolutionP})   to obtain
\begin{equation} \label{energy10}
\frac{d}{dt} \| h\|_{\mathcal{\dot{F}}^{s,p}}^p + p \| h\|_{\mathcal{\dot{F}}^{s+4/p,p}}^p
\le  p   \| h \|_{\mathcal{\dot{F}}^{s+4/p,p}}^p \sum_{j=2}^{\infty}   \frac{j^{s+4/p-2}}{j!}  \| h \|_2^{j-1}
\end{equation}
The sum in the upper bound is $f_{s+4/p-4}(\| h \|_2)$ from \eqref{infinite.series.fcn.s}.  Similar to the previous discussions, we choose the positive real number $y_{sp*}$ to satisfy $f_{s+4/p-4}(y_{sp*}) = 1$.

Then if
\begin{equation} \label{conditionF}
 \| h_{0} \|_2 < \min(y_{sp*}, y_*), 
\end{equation}
it further holds that
$
f_{s+4/p-4} \big( \| h(\cdot,t) \|_2 \big)
\le
f_{s+4/p-4} \big( \| h_{0} \|_2 \big) < 1.
$
Hence we again have the energy-dissipation relation 
\begin{equation} \label{energy-dissF}
  \frac{d}{dt} \| h(\cdot,t) \|_{\mathcal{\dot{F}}^{s,p}}^p 
   + p    \sigma_{s,p} \| h(\cdot, t) \|_{\mathcal{\dot{F}}^{s+4/p,p}}^p 
  \le  0 \,.
\end{equation}
when (\ref{conditionF}) holds.  Here $\sigma_{s,p} \eqdef \big( 1-f_{s+4/p-4} ( \| h_{0} \|_2 )  \big)>0$.

 \section{Large time decay in $\mathcal{\dot{F}}^{s,1}$}\label{sec:largedecay}

In this section we prove the following large time decay rates in the whole space

\begin{prop}\label{prop.large.decay}
Given the solution to \eqref{pde} from Theorem \ref{Main.Theorem}.  Suppose additionally that $\| h_0 \|_{s} <\infty$ for some $s> \max\{-2,-d\}$  Further suppose $\| h_0 \|_{\rr,\infty} <\infty$ for some $-d \le \rr <s$.  Assume that $\| h_0 \|^{2}_{\mathcal{\dot{F}}^{\rr+d+2,2} }$ and $\| h_0 \|^{2}_{\mathcal{\dot{F}}^{0,2} }$ are both initially finite. Then we have the following uniform decay estimate for $t\ge 0$:
\begin{equation}\label{decay1}
\| h \|_s \lesssim  (1+t)^{-(s-\rr)/4}.
\end{equation}
The implicit constant in the inequality above depends on $\| h_0 \|_2$, $\| h_0 \|_s$, $\| h_0 \|_{\rr,\infty}$, $\| h_0 \|^{2}_{\mathcal{\dot{F}}^{\rr+d+2,2} }$, and $\| h_0 \|^{2}_{\mathcal{\dot{F}}^{0,2} }$. 
\end{prop}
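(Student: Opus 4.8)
The plan is to establish the decay rate in \eqref{decay1} using a Fourier splitting argument combined with the energy-dissipation relation \eqref{energy-diss2}. The starting point is the differential inequality from the a priori estimates: for $s > \max\{-2,-d\}$ we have, from \eqref{energy-diss2},
$$
\frac{d}{dt} \| h \|_s + \sigma_{s,1} \| h \|_{s+4} \le 0.
$$
The obstacle is that the dissipation controls a \emph{higher} $s$-norm ($\|h\|_{s+4}$), whereas the standard Fourier splitting method wants to bound the dissipated norm below by the energy norm itself up to a low-frequency error. The whole-space setting means there is no spectral gap, so I expect the low-frequency part to be the crux.

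\textbf{Step 1: Fourier splitting of the dissipation.} I would split the frequency domain at a time-dependent radius $R(t)$ into $\{|\xi| \le R(t)\}$ and $\{|\xi| > R(t)\}$. On the high-frequency region the extra factor $|\xi|^4 \ge R(t)^4$ gives
$$
\| h \|_{s+4} \ge R(t)^{4} \int_{|\xi| > R(t)} |\xi|^{s} |\hat h(\xi,t)| \, d\xi
= R(t)^4 \Big( \| h \|_s - \int_{|\xi| \le R(t)} |\xi|^s |\hat h| \, d\xi \Big).
$$
Substituting back yields a differential inequality of the form
$$
\frac{d}{dt} \| h \|_s + \sigma_{s,1} R(t)^4 \| h \|_s
\le \sigma_{s,1} R(t)^4 \int_{|\xi| \le R(t)} |\xi|^s |\hat h(\xi,t)| \, d\xi.
$$
The remaining task is to control the low-frequency integral on the right.

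\textbf{Step 2: Controlling low frequencies via the Besov-type norm.} Here I would exploit the hypothesis $\|h_0\|_{\rr,\infty} < \infty$ for $-d \le \rr < s$. The point is that the Besov-type $\rr$-norm \eqref{normSinfty} should propagate uniformly in time (this is exactly what Section \ref{uniform.bound.sec} is designed to furnish, and it is where the extra hypotheses $\| h_0 \|^{2}_{\mathcal{\dot{F}}^{\rr+d+2,2}}$ and $\| h_0 \|^{2}_{\mathcal{\dot{F}}^{0,2}}$ enter). Granting a uniform bound $\sup_t \| h \|_{\rr,\infty} \le C_0$, I would estimate the low-frequency piece by writing $|\xi|^s = |\xi|^{s-\rr} |\xi|^{\rr}$, bounding $|\xi|^{s-\rr} \le R(t)^{s-\rr}$ on the ball, and summing the dyadic annuli $C_k$ with $2^k \lesssim R(t)$:
$$
\int_{|\xi| \le R(t)} |\xi|^s |\hat h| \, d\xi
\le R(t)^{s-\rr} \sum_{2^k \lesssim R(t)} \int_{C_k} |\xi|^{\rr} |\hat h| \, d\xi
\lesssim R(t)^{s-\rr} \, C_0 \, \log\!\big(2 + R(t)\big),
$$
and since $\rr \ge -d$ the geometric factor from the annuli is controlled; the borderline $\rr = -d$ is where the Besov-type (rather than plain $s$-norm) structure is essential, yielding at worst a logarithmic loss that is absorbed by choosing $R(t)$ appropriately.

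\textbf{Step 3: Choosing $R(t)$ and closing the ODE.} Setting $X(t) = \|h\|_s(t)$ and choosing $R(t)^4 = \beta/(1+t)$ for a suitable constant $\beta$ (so that the integrating factor $(1+t)^{\sigma_{2,1}\beta}$ matches the target algebraic rate), the differential inequality becomes
$$
\frac{d}{dt} X + \frac{\sigma_{s,1}\beta}{1+t} X \le \frac{C}{(1+t)^{1+(s-\rr)/4}},
$$
after inserting the Step 2 bound with $R(t)^{4+(s-\rr)} \sim (1+t)^{-1-(s-\rr)/4}$. Multiplying by the integrating factor $(1+t)^{\sigma_{s,1}\beta}$ and integrating, then choosing $\beta$ large enough that $\sigma_{s,1}\beta > (s-\rr)/4$, produces exactly $X(t) \lesssim (1+t)^{-(s-\rr)/4}$, which is \eqref{decay1}. \textbf{I expect the main obstacle} to be Step 2: securing the \emph{time-uniform} bound on $\|h\|_{\rr,\infty}$ down to the critical index $\rr = -d$, which is precisely why the statement defers those estimates to Section \ref{uniform.bound.sec} and why the additional $L^2$-type hypotheses on the data appear; everything else is a routine, if delicate, Fourier-splitting computation.
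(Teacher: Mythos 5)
Your route is the same as the paper's: the paper proves this proposition by feeding the dissipation inequality \eqref{energy-diss2} and the time-uniform bound of Proposition \ref{prop.bd.uniform} (deferred, exactly as you anticipated, to Section \ref{uniform.bound.sec}, where the $\mathcal{\dot{F}}^{\rr+d+2,2}$ and $\mathcal{\dot{F}}^{0,2}$ hypotheses enter) into a Fourier-splitting decay lemma (Lemma \ref{decaylemma}, from Patel--Strain), whose proof is your Steps 1 and 3. However, your Step 2 as written fails. After you pull the factor $R(t)^{s-\rr}$ out of the low-frequency integral, the remaining quantity is
$$
\sum_{2^k \lesssim R(t)} \int_{C_k} |\xi|^{\rr} |\hat h(\xi,t)| \, d\xi ,
$$
and this sum runs over \emph{all} $k \in \mathbb{Z}$ with $2^k \lesssim R(t)$ --- infinitely many annuli, since every $C_k$ with $k$ large and negative lies inside the ball $\{|\xi| \le R(t)\}$. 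Each annulus contributes up to $C_0$, so counting annuli gives a divergent sum, not a logarithm; indeed the middle quantity in your chain can be genuinely infinite (take $|\hat h|$ so that each annulus contributes exactly $C_0$). Your claimed bound $C_0\, R(t)^{s-\rr}\log(2+R(t))$ therefore does not follow; a logarithmic count would be valid only if the annuli were bounded away from frequency zero, which they are not.

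The repair is exactly what the paper does in the proof of Lemma \ref{decaylemma}: do not discard the vanishing of $|\xi|^{s-\rr}$ at the origin. On each annulus bound $|\xi|^{s-\rr} \le 2^{k(s-\rr)}$ and keep this weight inside the dyadic sum, so that
$$
\int_{|\xi| \le R(t)} |\xi|^s |\hat h| \, d\xi
\le \sum_{2^k \lesssim R(t)} 2^{k(s-\rr)} \int_{C_k} |\xi|^{\rr} |\hat h| \, d\xi
\le C_0 \sum_{2^k \lesssim R(t)} 2^{k(s-\rr)}
\lesssim C_0\, R(t)^{s-\rr},
$$
where the geometric series converges precisely because $s-\rr>0$ (this is where the hypothesis $\rr < s$ is used), with no logarithmic loss at all, even at the critical index $\rr=-d$. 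With this corrected low-frequency bound, your Step 3 closes as you describe (integrating factor $(1+\delta t)^{a}$ with $a$ chosen large enough, as in the paper), and the proof then coincides with the paper's.
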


Notice that this decay only depends on the smallness of the $\| h_0 \|_2$ norm.  No other norm is required to be small.  Further notice that Proposition \ref{prop.large.decay} directly implies \eqref{fastest.decay.rate} in Theorem \ref{Cor.Main.Theorem}

A key step in proving \eqref{decay1} is to prove the following uniform estimate:

\begin{prop}\label{prop.bd.uniform}  Given the solution from Theorem \ref{Main.Theorem}.  Suppose additionally that $\| h_0 \|_{\rr,\infty} <\infty$ for $\rr \ge -d$.  Further assume that $\| h_0 \|^{2}_{\mathcal{\dot{F}}^{\rr+d+2,2} }$ and $\| h_0 \|^{2}_{\mathcal{\dot{F}}^{0,2} }$ are both initially finite.  Then we have
\begin{equation}\label{uniform1}
\| h \|_{\rr,\infty} \lesssim 1, \quad \rr \ge -d.
\end{equation}
\end{prop}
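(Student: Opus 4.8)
\emph{Proof plan.} The plan is to run a Duhamel (mild--solution) estimate directly on the Fourier side, annulus by annulus, and to control the nonlinear contribution by two complementary bounds on the iterated convolution, tuned to high and low frequencies respectively. Writing $\hat g(\xi,\tau)=|\xi|^2\hat h(\xi,\tau)$, so that $\|\hat g\|_{L^1}=\|h\|_2$ and $\|\hat g\|_{L^2}=\|h\|_{\mathcal{\dot{F}}^{2,2}}$, the Duhamel formula associated with \eqref{Fourier1} reads
\[
\hat h(\xi,t)=e^{-|\xi|^4 t}\hat h_0(\xi)-\int_0^t e^{-|\xi|^4(t-\tau)}|\xi|^2\sum_{j=2}^\infty\frac{1}{j!}\,\hat g^{*j}(\xi,\tau)\,d\tau .
\]
Multiplying by $|\xi|^{\rr}$ and integrating over the annulus $C_k$ from \eqref{Cj}, the linear term is immediately bounded by $\int_{C_k}|\xi|^{\rr}|\hat h_0|\,d\xi\le\|h_0\|_{\rr,\infty}$ since $e^{-|\xi|^4 t}\le 1$. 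On $C_k$ we have $|\xi|\sim 2^k$ and $|\xi|^4\sim 2^{4k}$, so everything reduces to showing
\[
\sup_{k\in\mathbb{Z}}\sup_{t\ge0}\;2^{k(\rr+2)}\int_0^t e^{-c2^{4k}(t-\tau)}\sum_{j=2}^\infty\frac{1}{j!}\Big(\int_{C_k}|\hat g^{*j}(\xi,\tau)|\,d\xi\Big)\,d\tau\lesssim 1 .
\]

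I would use two estimates for $\int_{C_k}|\hat g^{*j}|\,d\xi$. For the \emph{low} frequencies, bound the convolution in $L^\infty$: from $\|\hat g^{*j}\|_{L^\infty}\le\|\hat g\|_{L^2}^2\|\hat g\|_{L^1}^{j-2}=\|h\|_{\mathcal{\dot{F}}^{2,2}}^2\|h\|_2^{j-2}$ together with $|C_k|\sim 2^{kd}$ one gets the factor $2^{kd}$ that captures the genuine smallness of the low--frequency output of the convolution. For the \emph{high} frequencies, extract frequency decay by a weighted $L^\infty$ bound: since $\rr+d\ge 0$, on $C_k$ one has $\sup_{C_k}|\hat g^{*j}|\le 2^{-k(\rr+d)}\,\||\xi|^{\rr+d}\hat g^{*j}\|_{L^\infty}$, and moving the weight onto one factor via \eqref{triangleS} and then pairing two $L^2$ factors in a convolution yields $\int_{C_k}|\hat g^{*j}|\lesssim 2^{-k\rr}\,j^{\rr+d}\,\|h\|_{\mathcal{\dot{F}}^{\rr+d+2,2}}\|h\|_{\mathcal{\dot{F}}^{2,2}}\|h\|_2^{j-2}$. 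In both bounds the $j$--dependence is polynomial and is summed against $1/j!$ with $\|h\|_2\le\|h_0\|_2$ bounded, so the series converge to an $O(1)$ constant.

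It then remains to insert these into the time integral. For $k<0$ I would discard the exponential and use the first bound, reducing the contribution to $2^{k(\rr+d+2)}\int_0^\infty\|h\|_{\mathcal{\dot{F}}^{2,2}}^2\,d\tau$; the time integral is finite by the dissipation inequality \eqref{energy-dissF} with $s=0$, $p=2$ (and hence controlled by $\|h_0\|_{\mathcal{\dot{F}}^{0,2}}$), while the prefactor is bounded for $k<0$ precisely because $\rr+d+2>0$, i.e. because $\rr\ge -d$. For $k\ge0$ I would use the second bound together with the uniform--in--time bounds $\|h\|_{\mathcal{\dot{F}}^{\rr+d+2,2}}(\tau)\lesssim\|h_0\|_{\mathcal{\dot{F}}^{\rr+d+2,2}}$ and $\|h\|_{\mathcal{\dot{F}}^{2,2}}(\tau)\lesssim\|h_0\|_{\mathcal{\dot{F}}^{2,2}}$ from \eqref{S1normbound} (the latter finite by interpolation between $\mathcal{\dot{F}}^{0,2}$ and $\mathcal{\dot{F}}^{\rr+d+2,2}$), and the elementary bound $\int_0^t e^{-c2^{4k}(t-\tau)}\,d\tau\le c^{-1}2^{-4k}$; the net power becomes $2^{k(\rr+2)}\cdot 2^{-k\rr}\cdot 2^{-4k}=2^{-2k}$, bounded for $k\ge0$. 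Combining the linear and nonlinear contributions and taking the supremum over $k$ then yields \eqref{uniform1}.

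The main obstacle is exactly the low--frequency regime $k\to-\infty$, where the parabolic smoothing $e^{-|\xi|^4(t-\tau)}\approx 1$ provides no decay; a naive estimate using only $\|\hat g^{*j}\|_{L^1}\le\|h\|_2^{j}$ produces a prefactor $2^{k(\rr-2)}$ that diverges as $k\to-\infty$. The point that makes the argument work is that the low--frequency output of the iterated convolution is \emph{quadratically} small in a dissipation norm: the $L^\infty$ bound $\|\hat g^{*j}\|_{L^\infty}\lesssim\|h\|_{\mathcal{\dot{F}}^{2,2}}^2\|h\|_2^{j-2}$ supplies two factors of $\|h\|_{\mathcal{\dot{F}}^{2,2}}$, and $\|h\|_{\mathcal{\dot{F}}^{2,2}}^2$ is time--integrable, so that after multiplying by the annulus volume $2^{kd}$ the surviving prefactor $2^{k(\rr+d+2)}$ is controlled for every $\rr\ge -d$. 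This is precisely where the hypotheses $\|h_0\|_{\mathcal{\dot{F}}^{0,2}}<\infty$ (low frequencies) and $\|h_0\|_{\mathcal{\dot{F}}^{\rr+d+2,2}}<\infty$ (high frequencies) enter.
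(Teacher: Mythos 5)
Your argument is correct, but it runs along a genuinely different track than the paper's. The paper never writes the Duhamel formula: it integrates \eqref{Fourier1} directly over each annulus to get the differential inequality \eqref{energyCj}, discards the dissipation term, and uses a \emph{single} estimate valid uniformly in $k$ --- trading the annulus volume $2^{kd}$ against the weight $|\xi|^{\rr+d+2}$ --- which bounds the nonlinearity by $\| h \|_{\mathcal{\dot{F}}^{\rr+d+4,2}}\| h \|_{\mathcal{\dot{F}}^{2,2}}$ times a summable series in $j$. Since $\| h \|_{\mathcal{\dot{F}}^{\rr+d+4,2}}$ is a dissipation-level norm, the paper must then integrate in time and apply Cauchy--Schwarz, consuming \emph{both} time-integrated dissipations coming from \eqref{energy-dissF} with $(s,p)=(\rr+d+2,2)$ and $(0,2)$. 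You instead keep the semigroup factor $e^{-c2^{4k}(t-\tau)}$ and split frequencies: at high frequencies the bound $\int_0^t e^{-c2^{4k}(t-\tau)}\,d\tau\lesssim 2^{-4k}$ supplies the time integrability, so you only need the weaker pointwise-in-time monotonicity of the energy-level norms $\| h \|_{\mathcal{\dot{F}}^{\rr+d+2,2}}$ and $\| h \|_{\mathcal{\dot{F}}^{2,2}}$; at low frequencies, where the semigroup is useless, your estimate coincides in substance with the paper's (annulus volume, two factors of $\| h \|_{\mathcal{\dot{F}}^{2,2}}$, and the integrated dissipation $\int_0^\infty \| h \|^2_{\mathcal{\dot{F}}^{2,2}}\,d\tau \lesssim \| h_0 \|^2_{\mathcal{\dot{F}}^{0,2}}$ from \eqref{energy-dissF} at $s=0$). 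The core convolution estimates --- the triangle inequality \eqref{triangleS} and Young's inequality pairing two $L^2$ factors against $j-2$ copies of $\| h \|_2$ --- are identical in both proofs, and both rely on the same a priori inputs. One citation must be repaired, though: the uniform bounds $\| h(\tau) \|_{\mathcal{\dot{F}}^{\rr+d+2,2}}\lesssim \| h_0 \|_{\mathcal{\dot{F}}^{\rr+d+2,2}}$ and $\| h(\tau) \|_{\mathcal{\dot{F}}^{2,2}}\lesssim \| h_0 \|_{\mathcal{\dot{F}}^{2,2}}$ should be drawn directly from the a priori inequality \eqref{energy-dissF} (whose left side shows these norms are nonincreasing), not from \eqref{S1normbound} in Theorem \ref{Cor.Main.Theorem}: that theorem is proved \emph{using} Proposition \ref{prop.bd.uniform} (via the decay of Proposition \ref{prop.large.decay}), so invoking it here would be circular. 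With that substitution your proof stands on the same foundation as the paper's, including the same implicit smallness needed to run \eqref{energy-dissF} at $s=\rr+d+2$, $p=2$.
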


The proof of Proposition \ref{prop.bd.uniform}  will be addressed in Section \ref{uniform.bound.sec}.  The goal of this section is to establish \eqref{decay1} by assuming \eqref{uniform1}.

We will use the following decay lemma from Patel-Strain \cite{MR3683311}:

\begin{lemma}\label{decaylemma}
Suppose $g=g(t,x)$ is a smooth function with $g(0,x) = g_0(x)$ and assume that for some $\ss \in \mathbb{R}$, $\|g_0\|_{\ss} < \infty$ and 
$$
\|g(t)\|_{\rr,\infty} \leq C_{0}
$$ 
for some $\rr \ge -\Ddim$ satisfying  $ \rr < \ss$.
Let the following differential inequality hold for $\gamma>0$ and for some $C>0$:
	$$
	\frac{d}{dt}\|g\|_{\ss} \leq -C \|g\|_{\ss +\gamma}.
	$$ 
	  Then we have the uniform in time estimate
	$$ 
	\|g\|_{\ss}(t) \lesssim \left( \|g_0\|_{\ss}+ C_{0}\right) (1+t)^{-(\ss-\rr)/\gamma}.
	$$
\end{lemma}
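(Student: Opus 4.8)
The natural approach is a Fourier-splitting (Schonbek-type) argument: I would convert the differential inequality into a closed-form estimate for $y(t)\eqdef\|g\|_\ss(t)$ by bounding the dissipation norm $\|g\|_{\ss+\gamma}$ from below in terms of $\|g\|_\ss$ and the low-frequency quantity $C_0$. The plan is to split the frequency integral defining $\|g\|_\ss=\int_{\mathbb R^d}|\xi|^\ss|\hat g|\,d\xi$ at a time-dependent radius $R=R(t)$ into the inner region $\{|\xi|\le R\}$ and the outer region $\{|\xi|>R\}$, estimate each piece, and then optimize over $R(t)$.

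On the outer region I would simply factor out a negative power of $R$: since $|\xi|^\ss=|\xi|^{-\gamma}|\xi|^{\ss+\gamma}\le R^{-\gamma}|\xi|^{\ss+\gamma}$ when $|\xi|>R$, this gives $\int_{|\xi|>R}|\xi|^\ss|\hat g|\,d\xi\le R^{-\gamma}\|g\|_{\ss+\gamma}$. On the inner region I would use the hypothesis $\|g\|_{\rr,\infty}\le C_0$ together with the dyadic shells $C_k$ from \eqref{Cj}. Writing $|\xi|^\ss=|\xi|^{\ss-\rr}|\xi|^\rr$ and summing over the shells $C_k$ with $2^k\lesssim R$, on each shell $|\xi|^{\ss-\rr}\approx 2^{k(\ss-\rr)}$, so that
\begin{equation*}
\int_{|\xi|\le R}|\xi|^\ss|\hat g|\,d\xi \lesssim \sum_{2^k\lesssim R} 2^{k(\ss-\rr)}\int_{C_k}|\xi|^\rr|\hat g|\,d\xi \lesssim C_0 \sum_{2^k\lesssim R}2^{k(\ss-\rr)} \lesssim C_0 R^{\ss-\rr}.
\end{equation*}
Here the crucial point is that $\ss-\rr>0$, which makes the geometric series summable down to $k\to-\infty$ and dominated by its top shell $2^k\approx R$; the assumption $\rr\ge-\Ddim$ guarantees that each shell integral is finite and the $\rr,\infty$-norm is well defined. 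Combining the two regions gives $\|g\|_\ss\le C_0 R^{\ss-\rr}+R^{-\gamma}\|g\|_{\ss+\gamma}$, equivalently the lower bound $\|g\|_{\ss+\gamma}\ge R^\gamma\|g\|_\ss-C_0 R^{\gamma+\ss-\rr}$.

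Finally I would feed this into the hypothesis $y'\le -C\|g\|_{\ss+\gamma}$ to obtain the linear differential inequality $y'\le -CR^\gamma y+CC_0R^{\gamma+\ss-\rr}$, and choose $R(t)$ by $CR(t)^\gamma=\beta/(1+t)$ for a constant $\beta$ to be fixed large. Writing $a\eqdef(\ss-\rr)/\gamma>0$, this becomes $y'+\frac{\beta}{1+t}y\le\frac{K}{(1+t)^{1+a}}$ with $K\approx C_0\beta(\beta/C)^{a}$. Multiplying by the integrating factor $(1+t)^\beta$, integrating from $0$ to $t$, and choosing $\beta>a$ so that the forcing integrates to $(1+t)^{\beta-a}$, yields $y(t)\lesssim\|g_0\|_\ss(1+t)^{-\beta}+C_0(1+t)^{-a}$; since $\beta>a$ the first term is negligible and we arrive at $\|g\|_\ss(t)\lesssim(\|g_0\|_\ss+C_0)(1+t)^{-(\ss-\rr)/\gamma}$, as claimed. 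I expect the only genuinely delicate step to be the inner-region estimate, since it is where the hypotheses $\ss>\rr$ and $\rr\ge-\Ddim$ are used and one must decompose into the annuli $C_k$ rather than bound $\int_{|\xi|\le R}|\xi|^\rr|\hat g|\,d\xi$ directly, the Besov-type norm $\|g\|_{\rr,\infty}$ controlling only each individual shell; the outer-region bound and the ODE integration are routine, the only bookkeeping being to take $\beta$ strictly larger than $a=(\ss-\rr)/\gamma$ so the homogeneous decay does not interfere with the forced rate.
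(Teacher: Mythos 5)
Your proposal is correct and follows essentially the same Fourier-splitting argument as the paper: the paper also splits at a radius $R(t)\approx(1+\delta t)^{-1/\gamma}$, bounds the low-frequency piece by $C_0R^{\ss-\rr}$ via the dyadic shells $C_k$ and the geometric series (using $\ss>\rr$), and closes with the integrating factor $(1+\delta t)^{a}$, $a>(\ss-\rr)/\gamma$, exactly paralleling your $(1+t)^{\beta}$, $\beta>a$. The only difference is cosmetic bookkeeping: the paper rearranges the splitting as a lower bound on $\|g\|_{\ss+\gamma}$ and fixes its constants by $a\delta=C$, whereas you split $\|g\|_{\ss}$ directly and tune $\beta$, which is the same inequality.
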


\begin{remark}\label{remarkdecay}
Note that by \eqref{ineqbd} we have $\|f\|_{\rr,\infty}(t) \leq \|f\|_{\rr}(t)$. Therefore we can use Lemma \ref{decaylemma} if we can bound $\|f\|_{\rr}(t)$ for $\rr > -\Ddim$ uniformly in time.
\end{remark}

This lemma is stated in the paper \cite{MR3683311} with $\gamma=1$, however the similar proof below assumes only that $\gamma>0$.  We include the proof for completeness.

\begin{proof}
For some $\delta, \kappa>0$ to be chosen, we initially observe that
\begin{align*}
	\|g\|_{\kappa} &=\int_{\mathbb{R}^{d}}|\xi|^{\kappa}|\hat{g}(\xi)|d\xi \\
	&\geq \int_{|\xi|>(1+\delta t)^{s}}|\xi|^{\kappa}|\hat{g}(\xi)|d\xi \\
	&\geq (1+\delta t)^{s\beta}\int_{|\xi|>(1+\delta t)^{s}}|\xi|^{\kappa-\beta}|\hat{g}(\xi)|d\xi\\
	&= (1+\delta t)^{s\beta}\Big(\|g\|_{\kappa-\beta} \  - \int_{|\xi|\leq(1+\delta t)^{s}}|\xi|^{\kappa-\beta}|\hat{g}(\xi)|d\xi\Big).
\end{align*}
Using this inequality with $\kappa = \ss + \gamma$ and $\beta = \gamma$, we obtain that
\begin{multline*}
\frac{d}{dt}\|g\|_{\ss} + C(1+\delta t)^{s\gamma}\|g\|_{\ss} \leq -C\|g\|_{\ss+\gamma} + C(1+\delta t)^{s\gamma}\|g\|_{\ss} 
\\
\leq C(1+\delta t)^{s\gamma}\int_{|\xi| \leq(1+\delta t)^{s}}|\xi|^{\ss}|\hat{g}(\xi)|d\xi.
\end{multline*}
	Then, using the sets $C_k$ as in \eqref{normSinfty} and defining $\chi_{S}$ to be the characteristic function on a set $S$, the upper bound in the last inequality can be bounded as follows
\begin{align*}
	\int_{|\xi|\leq(1+\delta t)^{s}}|\xi|^{\ss}|\hat{g}(\xi)|d\xi
	&= \sum_{k\in\mathbb{Z}} \int_{C_{k}}\chi_{\{|\xi|\leq (1+\delta t)^{s}\}}|\xi|^{\ss}|\hat{g}| \ d\xi\\ 
	&\approx \sum_{2^{k}\leq (1+\delta t)^{s}} \int_{C_{k}} |\xi|^{\ss}|\hat{g}| \ d\xi\\
	&\lesssim \|g\|_{\rr,\infty}\sum_{2^{k}\leq (1+\delta t)^{s}} 2^{k(\ss-\rr)}\\
	&\lesssim \|g\|_{\rr,\infty} (1+\delta t)^{s(\ss-\rr)}\sum_{2^{k}(1+\delta t)^{-s}\leq 1} 2^{k(\ss-\rr)}(1+\delta t)^{-s(\ss-\rr)} \\
	&\lesssim \|g\|_{\rr,\infty}(1+\delta t)^{s(\ss-\rr)} \\
	&\lesssim C_{0}(1+\delta t)^{s(\ss-\rr)},
\end{align*}
where the implicit constant in the inequalities does not depend on $t$.  In particular we have used that the following uniform in time estimate holds
	$$
	\sum_{2^{k}(1+\delta t)^{-s}\leq 1} 2^{k(\ss-\rr)}(1+\delta t)^{-s(\ss-\rr)} \lesssim 1.
	$$
	Combining the above inequalities, we obtain that
	\bea\label{use}
	\frac{d}{dt}\|g\|_{\ss} + C(1+\delta t)^{s\gamma}\|g\|_{\ss} \lesssim C_{0} (1+\delta t)^{s\gamma}(1+\delta t)^{s(\ss-\rr)}.
	\eea
In the following estimate will use \eqref{use} with $s=-1/\gamma$, we suppose $a > (\ss-\rr)/\gamma >0$, and we choose $\delta > 0$ such that $a\delta = C$.  We then obtain that
\begin{align*}
	\frac{d}{dt}((1+\delta t)^{a}\|g\|_{\ss}) &=  (1+\delta t)^{a}\frac{d}{dt}\|g\|_{\ss} + a\delta \|g\|_{\ss}(1+\delta t)^{a-1} \\
	&=  (1+\delta t)^{a}\frac{d}{dt}\|g\|_{\ss} + C \|g\|_{\ss}(1+\delta t)^{a-1} \\
	&\leq (1+\delta t)^{a}\Big(\frac{d}{dt}\|g\|_{\ss} + C(1+\delta t)^{-1}\|g\|_{\ss}\Big) \\
	&\lesssim  C_{0} (1+\delta t)^{a-1-(\ss-\rr)/\gamma}.
\end{align*}
Since $a > \ss-\rr$, we integrate in time to obtain that
$$ 
	(1+\delta t)^{a}\|g(t)\|_{\ss} \lesssim \|g_0\|_{\ss} + \frac{ C_{0}}{\delta} (1+\delta t)^{a-(\ss-\rr)/\gamma}.
$$ 
We conclude our proof by dividing both sides of the inequality by $(1+\delta t)^{a}$.
\end{proof}

We have established the differential energy inequality \eqref{energy3} for the equation \eqref{pde}.  Thus to prove the time decay in \eqref{decay1}
it remains only to establish \eqref{uniform1}.

 \section{Proof of the uniform bound $\| h \|_{\rr,\infty} \lesssim 1$}\label{uniform.bound.sec}
 
 In this section we will prove the uniform bound in \eqref{uniform1}.
We recall the equation \eqref{Fourier1} again.  For $\max\{ -2, -d\} < s \le 2$ from \eqref{condition1}  and \eqref{energy3} and \eqref{energy-diss2} we have
\begin{equation}\label{uniformSbound1}
\| h \|_s(t) \lesssim \| h_0 \|_s.
\end{equation}
This will hold assuming that $\| h_0 \|_s<\infty$ and \eqref{condition1}  holds.

We will now prove the uniform bound \eqref{uniform1} when $\rr \ge -d$.   Notice that \eqref{uniformSbound1} and \eqref{ineqbd} imply \eqref{uniform1} when  $\max\{ -2, -d\} < s \le 2$.   The proof of the bound below can be used when 
$-d \le \rr \le \max\{-2,-d\}.$  

\begin{proof}[Proof of Proposition \ref{prop.bd.uniform}]
We recall \eqref{Fourier1}, and we uniformly bound the integral over $C_{k}$ for each $j\in\mathbb{Z}$ as in \eqref{normSinfty}. 
We obtain the following differential inequality 
\begin{multline}\label{energyCj}
\frac{d}{dt} \int_{C_{k}}|\xi|^{\rr}|\hat{h}(\xi,t)|d\xi + \int_{C_{k}} d\xi \ |\xi|^{\rr+4}|\hat{h}(\xi,t)| 
\\
\leq \sum_{j=2}^{\infty}  \frac{1}{j!}  \int_{C_{k}} d\xi \  |\xi|^{\rr+2} (|\xi|^2 \hat h)^{*j} (\xi,t).
\end{multline}
We will estimate the upper bound.  We can estimate the integral as
\begin{align} 
\int_{C_{k}} d\xi \  |\xi|^{\rr+2} (|\xi|^2 \hat h)^{*j} (\xi,t)
&\lesssim
2^{-d(k-1)} \int_{C_{k}} d\xi \  |\xi|^{\rr+d+2} (|\xi|^2 \hat h)^{*j} (\xi,t)
\nonumber \\
&\lesssim
  \| |\cdot|^{\rr+d+2} (|\cdot|^2 \hat h)^{*j} (\xi,t)\|_{L^\infty_\xi } 
\label{Besov1}
\end{align} 
The last inequality holds because the integral over $C_{k}$ is of size $2^{dk}$.

Notice further that $\rr +d \ge 0$ and then we can use \eqref{triangleS}.
We also use Young's inequality, first with $1 + \frac{1}{\infty} = \frac{1}{2} + \frac{1}{2}$, and again with $1+\frac{1}{2}=1+\frac{1}{2}$ repeatedly to obtain:
\begin{align} 
  \| |\cdot|^{\rr+d+2} (|\cdot|^2 \hat h)^{*j} (\cdot)\|_{L^\infty_\xi } 
  &\lesssim j^{\rr+d+2}
 \| |\cdot |^{\rr+d+4} \hat h (\cdot)\|_{L^2_\xi }
 \|(|\cdot|^2 \hat h)^{*(j-1)} (\cdot)\|_{L^2_\xi } 
\nonumber \\
&\lesssim j^{\rr+d+2}
 \| h \|_{\mathcal{\dot{F}}^{\rr+d+4,2} }
 \| |\cdot |^{2} \hat h (\cdot)\|_{L^2_\xi } 
  \| |\cdot |^2 \hat h (\cdot)\|_{L^1_\xi }^{j-2} 
\nonumber \\
& \lesssim j^{\rr+d+2}
 \| h \|_{\mathcal{\dot{F}}^{\rr+d+4,2} }
 \| h \|_{\mathcal{\dot{F}}^{2,2} } 
  \|  h \|_{2 }^{j-2}. 
\label{Besov2}
\end{align} 
Now we plug \eqref{Besov1} into \eqref{energyCj} to obtain
\begin{multline}\label{energyCjA}
\frac{d}{dt} \int_{C_{k}}|\xi|^{\rr}|\hat{h}(\xi,t)|d\xi + \int_{C_{k}} d\xi \ |\xi|^{\rr+4}|\hat{h}(\xi,t)| 
\\
\leq \sum_{j=2}^{\infty}  \frac{1}{j!}   \| |\cdot|^{\rr+d+2} (|\cdot|^2 \hat h)^{*j} (\cdot)\|_{L^\infty_\xi }. 
\end{multline}
We further estimate the upper bound using \eqref{Besov2} to obtain
\begin{multline*}
\sum_{j=2}^{\infty}  \frac{1}{j!}   \| |\cdot|^{\rr+d+2} (|\cdot|^2 \hat h)^{*j} (\cdot,t)\|_{L^\infty_\xi } 
\\
\lesssim
 \| h (t)\|_{\mathcal{\dot{F}}^{\rr+d+4,2} }
 \| h (t)\|_{\mathcal{\dot{F}}^{2,2} } 
 \sum_{j=2}^{\infty}  \frac{j^{\rr+d+2}}{j!}    \|  h (t)\|_{2 }^{j-2} 
  \\
\lesssim   \| h (t)\|_{\mathcal{\dot{F}}^{\rr+d+4,2} }
 \| h (t)\|_{\mathcal{\dot{F}}^{2,2} }. 
\end{multline*}
In the above we have used that 
\begin{equation}\notag 
 \sum_{j=2}^{\infty}  \frac{j^{\rr+d+2}}{j!}  \| h (t)\|_{2 }^{j-2}  \lesssim 1.
\end{equation}
The above holds because the sum initially
$
 \sum_{j=2}^{\infty}  \frac{j^{\rr+d+2}}{j!}  \| h_{0}\|_{2 }^{j-2} \lesssim 1
$
converges generally.    Then we further use the estimate \eqref{energy3} to see that $\| h (t)\|_{2 } \le \| h_{0}\|_{2 }$.

We conclude from integrating \eqref{energyCjA} and using the above estimates that 
\begin{multline}\label{energyBCj}
\int_{C_{k}}|\xi|^{\rr}|\hat{h}(\xi,t)|d\xi + \int_0^t ds \int_{C_{k}} d\xi \ |\xi|^{\rr+4}|\hat{h}(\xi,s)| 
\\
\leq \int_{C_{k}}|\xi|^{\rr}|\hat{h}_0 (\xi)|d\xi+ \int_0^t ds \| h (t)\|_{\mathcal{\dot{F}}^{\rr+d+4,2} }
 \| h (t)\|_{\mathcal{\dot{F}}^{2,2} }. 
\end{multline}
Thus as long as we make the proper assumptions to bound 
\begin{multline}\label{boundNeeded2}
\int_0^t ds \| h (t)\|_{\mathcal{\dot{F}}^{\rr+d+4,2} }
 \| h (t)\|_{\mathcal{\dot{F}}^{2,2} } 
 \\
\le  
\sqrt{\int_0^t ds \| h (t) \|^{2}_{\mathcal{\dot{F}}^{\rr+d+4,2} } \,dt}
 \sqrt{\int_0^t 
 \| h (t)\|^{2}_{\mathcal{\dot{F}}^{2,2} }  \,dt}
  \lesssim 1,
\end{multline}
then the bound \eqref{energyBCj} with  \eqref{boundNeeded2}  implies Proposition \ref{prop.bd.uniform}.  

However when $\| h_0 \|_{\mathcal{\dot{F}}^{\rr+d+2,2} }$ and $\| h_0 \|_{\mathcal{\dot{F}}^{0,2} }$ are both initially finite then \eqref{energy-dissF} implies that 
\eqref{boundNeeded2} holds.   Here we have used \eqref{energy-dissF}  with $p=2$, $s=\rr+d+2$ and $s=0$, respectively.
Then we obtain the bound \eqref{uniform1} for $\rr \ge -d$.
\end{proof}

 \section{Uniqueness}\label{sec:uniqueness}

In this section we prove the uniqueness of solutions to  \eqref{pde} which satisfy \eqref{condition1}.

 \begin{prop}\label{prop.uniqueness}  Given two solutions $h_1$ and $h_2$ to \eqref{pde} with the same initial data $h_0$ satisfying \eqref{condition1}.   Then $\| h_1-h_2\|_2 =0$.  If we further assume that the initial data satisfies $ \| h_0\|_0 <\infty$, then  $\| h_1-h_2\|_0 =0$.    In particular $\| h_1- h_2\|_{L^\infty_x} = 0$.
 \end{prop}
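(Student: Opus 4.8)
The plan is to estimate the difference $w = h_1 - h_2$ in the $\mathcal{\dot{F}}^{2,1}$ norm (the $2$-norm) by subtracting the two copies of the Taylor-expanded equation \eqref{Fourier1} and deriving a Gr\"onwall-type differential inequality for $\|w\|_2$. Writing $w = h_1 - h_2$, the linear part $\partial_t |\xi|^2\hat w + |\xi|^6 \hat w$ is identical in structure to \eqref{Fourier2}, and the dissipation term $\|w\|_6$ will be what I use to absorb the nonlinear error. The crux is to control the difference of the nonlinearities
$$
\Delta \sum_{j=2}^\infty \frac{1}{j!}\left( (-\Delta h_1)^j - (-\Delta h_2)^j \right),
$$
which on the Fourier side becomes a difference of iterated convolutions $(|\cdot|^2 \hat h_1)^{*j} - (|\cdot|^2\hat h_2)^{*j}$.

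First I would use the algebraic telescoping identity
$$
a^{*j} - b^{*j} = \sum_{i=0}^{j-1} a^{*i} * (a-b) * b^{*(j-1-i)},
$$
valid for convolution products, where $a = |\cdot|^2 \hat h_1$, $b = |\cdot|^2\hat h_2$, and $a - b = |\cdot|^2 \hat w$. Each of the $j$ terms in this sum contains exactly one factor of $|\cdot|^2\hat w$ and $j-1$ factors that are either $|\cdot|^2\hat h_1$ or $|\cdot|^2\hat h_2$. Then, exactly as in the derivation of \eqref{convolution1}, I would apply the triangle inequality \eqref{triangleS} with $s=4$ to distribute the outer weight $|\xi|^4$ onto one factor, raising it to a $\|\cdot\|_6$ norm, and bound the remaining factors by $\|\cdot\|_2$ via Young's inequality. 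The subtle point is bookkeeping: the $\|w\|_6$ dissipation should be extracted when the weight lands on the $w$-factor, but the weight can also land on an $h_i$-factor, producing $\|h_i\|_6 \|w\|_2$ instead. I would handle both cases, using that there are $j$ telescoping terms and roughly $j$ places the weight can go, yielding a total combinatorial factor like $j^2/j!$ (consistent with $f_2'$-type sums), and arrive at
$$
\frac{d}{dt}\|w\|_2 + \|w\|_6 \lesssim \|w\|_6\, g_1(\|h_1\|_2,\|h_2\|_2) + \|w\|_2\, g_2(\|h_1\|_2,\|h_2\|_2,\|h_1\|_6,\|h_2\|_6)
$$
for explicit entire functions $g_1, g_2$ evaluated at the conserved-small $2$-norms.

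The main obstacle, and the step requiring care, is showing that the coefficient $g_1$ of $\|w\|_6$ is strictly less than $1$ so that the dissipation term genuinely absorbs this part of the nonlinearity, leaving only a $\|w\|_2$ term on the right. Here I would invoke the smallness hypothesis \eqref{condition1}, $\|h_0\|_2 < y_*$, together with the monotonicity established via \eqref{energy3} which guarantees $\|h_i(t)\|_2 \le \|h_0\|_2 < y_*$ for both solutions. Choosing $y_*$ so that the relevant series sum (a derivative-of-$f_2$ type quantity) stays below $1$ — exactly the role played by $f_2(y_*)<1$ in Remark \ref{constant.size} — lets me move the $\|w\|_6$ term to the left-hand side with a positive coefficient $\sigma_{2,1}$. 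After this absorption I am left with
$$
\frac{d}{dt}\|w\|_2 \lesssim \|w\|_2\, g_2(t),
$$
where $\int_0^T g_2(t)\,dt < \infty$ on any finite interval because $\int_0^T \|h_i\|_6\, dt < \infty$ by \eqref{21normbound}. Since $\|w(0)\|_2 = \|h_0 - h_0\|_2 = 0$, Gr\"onwall's inequality forces $\|w\|_2 \equiv 0$, giving the first claim.

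For the second claim, assuming additionally $\|h_0\|_0 < \infty$, I would run the identical argument in the $0$-norm ($s=0$ in \eqref{Fourier3} and \eqref{energy-diss2}), using the already-established $\|w\|_2 \equiv 0$ to note that the nonlinear difference actually vanishes, so that $w$ solves the linear heat-type equation $\partial_t \hat w + |\xi|^4 \hat w = 0$ with zero data, whence $\|w\|_0 \equiv 0$; alternatively the same telescoping-plus-Gr\"onwall scheme applies directly with the $s=0$ energy-dissipation relation \eqref{energy-diss2}. Finally, the pointwise conclusion follows from the standard embedding
$$
\|w\|_{L^\infty_x} = \|\hat w\|_{L^1_\xi}\,(2\pi)^{-d/2} \le \|w\|_0 = 0,
$$
using $\|f\|_{L^\infty_x} \lesssim \|\hat f\|_{L^1_\xi} = \|f\|_0$, which completes the proof that $\|h_1 - h_2\|_{L^\infty_x} = 0$.
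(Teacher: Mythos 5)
Your proposal follows essentially the same route as the paper's proof: the same telescoping factorization of $(-\Delta h_1)^j-(-\Delta h_2)^j$ (written in convolution form), the same distribution of the $|\xi|^4$ weight via \eqref{triangleS} splitting into a $\|h_1-h_2\|_6$ term absorbed by the dissipation (whose coefficient is exactly $f_2(\|h_0\|_2)<1$ under \eqref{condition1}) plus a $\|h_1-h_2\|_2$ term handled by Gr\"onwall using $\int_0^t \|h_i\|_6\,ds<\infty$, followed by the same passage to $s=0$ and the Fourier $L^\infty$ embedding. The only cosmetic differences are your combinatorial constants (the paper's exact counts are $j^4/j!$ and $j^5/j!$, which still converge and still give absorption under the same smallness condition) and your shortcut of observing that the nonlinear difference vanishes identically once $\|h_1-h_2\|_2\equiv 0$, where the paper instead reruns the telescoping estimate at $s=0$.
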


\begin{proof}[Proof of Proposition \ref{prop.uniqueness}]  We consider the equation \eqref{pdeSum} satisfied by both  $h_1$ and $h_2$.  Then we have that   
\begin{equation}\notag 
   (h_1-h_2)_t + \Delta^2 (h_1-h_2) = \Delta \sum_{j=2}^{\infty}  \frac{(-\Delta h_1)^j-(-\Delta h_2)^j}{j!}.
\end{equation}
We further have the algebraic identity
\begin{equation}\notag
(-\Delta h_1)^j-(-\Delta h_2)^j
=
- \left(\Delta h_1 - \Delta h_2\right) \left( \sum_{m=0}^{j-1} (-\Delta h_1)^{j-1-m} (-\Delta h_2)^{m}   \right).
\end{equation}
We take the Fourier fransform to obtain
\begin{multline} \label{Fourier.diff}
   \partial_t  \left( \hat h_1(\xi,t) - \hat h_2(\xi,t) \right)+  |\xi|^4 \left( \hat h_1(\xi,t) - \hat h_2(\xi,t) \right)
   \\
   = -    |\xi|^2\left(|\cdot|^2\left( \hat h_1 - \hat h_2 \right) \right)*\sum_{j=2}^{\infty}  \frac{1}{j!} 
   \left( \sum_{m=0}^{j-1} (|\cdot|^2 \hat h_1)^{*(j-1-m)}  * (|\cdot|^2 \hat h_2)^{*m}    \right).
\end{multline}
Then we obtain that 
\begin{equation} \label{diff.ineq.s}
   \frac{d}{dt}  \| h_1 -  h_2 \|_s +  \| h_1 -  h_2 \|_{s+4}
   = \mathcal{U}_s.
\end{equation}
Above $\mathcal{U}_s$ is the integral of the right side of \eqref{Fourier.diff} multiplied by $|\xi|^s$:
\begin{multline} \notag
 \mathcal{U}_s \eqdef 
  \\
 -\int_{\mathbb{R}^d} ~d\xi ~
 |\xi|^{2+s}
 \sum_{j=2}^{\infty}  \frac{1}{j!} 
   \left( \sum_{m=0}^{j-1} \left(|\cdot|^2\left( \hat h_1 - \hat h_2 \right) \right)*(|\cdot|^2 \hat h_1)^{*(j-1-m)}  * (|\cdot|^2 \hat h_2)^{*m}    \right).
\end{multline}
We will consider the cases $s=2$ and then $s=0$.  

When $s=2$ above, we use \eqref{triangleS} and Young's inequality to obtain 
\begin{multline} \notag
\mathcal{U}_2 
   \le 
   \| h_1 -  h_2 \|_6
       \sum_{j=2}^{\infty}  \frac{j^4}{j!} 
    \max\{ \| h_1 \|_2,  \| h_2 \|_2\}^{j-1}    
   \\
      +\| h_1 -  h_2 \|_2 \max\{ \| h_1 \|_6,  \| h_2 \|_6\}
       \sum_{j=2}^{\infty}  \frac{j^5}{j!}  \max\{ \| h_1 \|_2,  \| h_2 \|_2\}^{j-2}.   
\end{multline}
Above we use $\max\{ \| h_1 \|_2,  \| h_2 \|_2\}$ only to simplify the notation.  

Now since the initial data satisfies \eqref{condition1} then we have
$$
\sum_{j=2}^{\infty}  \frac{j^4}{j!} 
    \max\{ \| h_1 \|_2,  \| h_2 \|_2\}^{j-1}    
    \le 
    \sum_{j=2}^{\infty}  \frac{j^4}{j!} 
     \| h_0 \|_2^{j-1}    <1.
$$
Also 
$\sum_{j=2}^{\infty}  \frac{j^5}{j!}  \max\{ \| h_1 \|_2,  \| h_2 \|_2\}^{j-2} \le \sum_{j=2}^{\infty}  \frac{j^5}{j!}  \| h_0 \|_2^{j-2} \lesssim 1.$
Then after integrating  \eqref{diff.ineq.s} with $s=2$ in time, we obtain that  
\begin{multline} \notag
\| h_1 -  h_2 \|_2(t) + \sigma \int_0^t ~ \| h_1 -  h_2 \|_{6}(s) ~ds
\\
\lesssim
 \int_0^t ~ \| h_1 -  h_2 \|_{2}(s)  \max\{ \| h_1 \|_6,  \| h_2 \|_6\}(s) ~ ds.
\end{multline}
Here we use $\sigma =  \sigma_{2,1}>0$  from \eqref{sigma.constant.def}.
Notice that $ \int_0^t ~  \max\{ \| h_1 \|_6,  \| h_2 \|_6\}(s) ~ ds<\infty$ by \eqref{energy3}.  
Now the Gronwall inequality implies that $\| h_1 -  h_2 \|_2=0$.

We turn to the case $s=0$ in \eqref{diff.ineq.s}.  We will obtain an upper bound for $\mathcal{U}_0$.  Similarly we will use \eqref{triangleS} with $s=2$.  Then with Young's inequality we obtain 
\begin{multline} \notag
\mathcal{U}_0 
   \le 
   \| h_1 -  h_2 \|_4
       \sum_{j=2}^{\infty}  \frac{j^2}{j!} 
    \max\{ \| h_1 \|_2,  \| h_2 \|_2\}^{j-1}    
   \\
      +\| h_1 -  h_2 \|_2 \max\{ \| h_1 \|_4,  \| h_2 \|_4\}
       \sum_{j=2}^{\infty}  \frac{j^3}{j!}  \max\{ \| h_1 \|_2,  \| h_2 \|_2\}^{j-2}.   
\end{multline}
However we know from the previous case that $\| h_1 -  h_2 \|_2=0$ therefore the second term above is zero.
Then similar to the previous case, for a $\delta>0$ we obtain
$$
\| h_1 -  h_2 \|_0(t) + \delta \int_0^t ~ \| h_1 -  h_2 \|_{4}(s) ~ds \le 0.
$$
We conclude that $\| h_1 -  h_2 \|_0=0$.
\end{proof}

We remark that the same methods can be used to prove that $\| h_1 -  h_2 \|_{\mathcal{\dot{F}}^{0,2}}=0$.

 \section{Local existence and approximation}\label{sec:local}
 
In this section we prove the local existence theorem using a suitable approximation scheme.  Since the methods in this section are rather standard, therefore we provide a sketch of the key ideas.

 \begin{prop}\label{prop.local}
Consider initial data $h_0 \in \mathcal{\dot{F}}^{0,2}$ further satisfying $\| h_0 \|_{2} < y_*$  where $y_*>0$ is given explicitly in Remark \ref{constant.size}.

Then there exists an interval $[0,T]$ upon which we have a local in time unique solution to \eqref{pde} given by 
$h(t) \in C^0([0,T]; \mathcal{\dot{F}}^{0,2})$.  This solution also gains instant analyticity as
$$
  \| h\|_{\mathcal{\dot{F}}^{2,1}_\nu} (t) \le  \| h_{0}\|_2 e^{bT},
$$
where $\nu(t) = b t$ for some fixed $b\in (0,1)$ with $b=b(\| h_0 \|_{2})$.
 \end{prop}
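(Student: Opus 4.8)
The plan is to prove Proposition \ref{prop.local} by a standard Picard-type iteration scheme carried out directly on the Fourier side in the analytic space $\mathcal{\dot{F}}^{2,1}_\nu$, where the exponential weight $e^{\nu(t)|\xi|}$ with $\nu(t)=bt$ is built into the iteration from the start so that the gain of analyticity falls out of the same contraction estimate. First I would write the equation \eqref{Fourier1} in mild (Duhamel) form, absorbing the linear biharmonic semigroup $e^{-t|\xi|^4}$ into an integrating factor, and define the iterates $\hat h^{(n+1)}$ by substituting $\hat h^{(n)}$ into the nonlinear convolution sum on the right-hand side. The key observation is that multiplication by $e^{\nu(t)|\xi|}$ interacts well with convolutions via the superadditivity of the linear weight along the triangle inequality \eqref{triangleS}: since $|\xi| \le |\xi-\xi_1|+\cdots+|\xi_{j-1}|$, we have $e^{\nu|\xi|} \le e^{\nu|\xi-\xi_1|}\cdots e^{\nu|\xi_{j-1}|}$, so the weight distributes across the convolution $(|\cdot|^2\hat h)^{*j}$. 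This lets us reproduce the convolution estimate \eqref{convolution1} verbatim with every $\|\cdot\|_s$-norm upgraded to the weighted norm $\|\cdot\|_{\mathcal{\dot{F}}^{s,1}_\nu}$.

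The next step is to control the loss coming from the derivative of the weight. Differentiating $\|h\|_{\mathcal{\dot{F}}^{2,1}_\nu}$ in time produces, besides the usual dissipation term $\|h\|_{\mathcal{\dot{F}}^{6,1}_\nu}$, an extra term $\dot\nu(t)\int |\xi|^{3}e^{\nu|\xi|}|\hat h|\,d\xi = b\,\|h\|_{\mathcal{\dot{F}}^{3,1}_\nu}$ arising from $\partial_t e^{\nu(t)|\xi|} = b|\xi|e^{\nu|\xi|}$. Since $b\in(0,1)$ and by interpolation or Young's inequality $b\|h\|_{\mathcal{\dot{F}}^{3,1}_\nu} \le \tfrac{b}{2}\|h\|_{\mathcal{\dot{F}}^{2,1}_\nu} + \tfrac{b}{2}\|h\|_{\mathcal{\dot{F}}^{4,1}_\nu}$ (or one can directly dominate $|\xi|^3 \le \tfrac12(|\xi|^2+|\xi|^4)$), this loss term is strictly controlled by the dissipation $\|h\|_{\mathcal{\dot{F}}^{6,1}_\nu}$ on the higher-frequency part and by the solution norm itself on the lower part, provided $b$ is chosen small enough depending only on $\|h_0\|_2$. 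Combining this with the weighted analogue of the nonlinear estimate \eqref{energy1} yields a differential inequality of the form
\begin{equation}\notag
   \frac{d}{dt} \| h\|_{\mathcal{\dot{F}}^{2,1}_\nu}
   \le b \| h\|_{\mathcal{\dot{F}}^{2,1}_\nu}
   + \| h\|_{\mathcal{\dot{F}}^{6,1}_\nu}\Big(f_2(\|h\|_{\mathcal{\dot{F}}^{2,1}_\nu}) - 1 + Cb\Big),
\end{equation}
and since \eqref{condition1} gives $f_2(\|h_0\|_2)<1$, for sufficiently small $b$ the bracketed coefficient stays negative on a short time interval, leaving $\frac{d}{dt}\|h\|_{\mathcal{\dot{F}}^{2,1}_\nu} \le b\|h\|_{\mathcal{\dot{F}}^{2,1}_\nu}$. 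A Gronwall argument then gives exactly the stated bound $\|h\|_{\mathcal{\dot{F}}^{2,1}_\nu}(t) \le \|h_0\|_2\, e^{bT}$ on $[0,T]$.

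To turn the a priori bound into an actual local solution, I would run the contraction mapping argument in the same weighted space: using the difference estimates in the spirit of the uniqueness proof of Proposition \ref{prop.uniqueness} (the algebraic factorization of $(-\Delta h_1)^j - (-\Delta h_2)^j$ and \eqref{triangleS}), one shows the Picard map is a contraction on a small ball in $C^0([0,T];\mathcal{\dot{F}}^{2,1}_\nu)$ for $T$ and $b$ small, yielding existence, uniqueness, and continuity in time; the $\mathcal{\dot{F}}^{0,2}$ regularity of the data is propagated separately using the $p=2$ a priori estimate \eqref{energy-dissF}. The main obstacle I expect is bookkeeping the competition between the analytic-radius loss term $b\|h\|_{\mathcal{\dot{F}}^{3,1}_\nu}$ and the nonlinear smallness margin $1-f_2(\|h_0\|_2)$: one must verify that $b$ can be chosen positive yet small enough that the smallness condition \eqref{condition1} is not destroyed by the weight growth, and that the implicit constants in the weighted convolution estimates do not degrade as $\nu(t)$ increases. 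This is purely quantitative and, because $\nu(t)=bt$ stays bounded by $bT$ on the finite interval, the exponential weight never blows up, so the estimates remain uniform and the argument closes.
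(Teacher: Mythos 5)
Your a priori analyticity estimate is essentially the paper's own: the differential inequality you derive is \eqref{energy14} (the paper absorbs the weight-derivative term via $|\xi|^3 \le |\xi|^6+|\xi|^2$ rather than your AM--GM splitting, an immaterial difference), and the Gronwall step yielding $\|h\|_{\mathcal{\dot{F}}^{2,1}_\nu}(t) \le \|h_0\|_2 e^{bT}$ is identical. The genuine gap is in how you produce the solution. You propose a Picard contraction for the Duhamel map on a small ball of $C^0([0,T];\mathcal{\dot{F}}^{2,1}_\nu)$. This cannot work, because \eqref{pde} is fully nonlinear: the nonlinearity $\Delta\sum_{j\ge2}(-\Delta h)^j/j!$ loses exactly four derivatives, the same order that the biharmonic semigroup gains. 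Quantitatively, from a ball in $C^0_t\mathcal{\dot{F}}^{2,1}_\nu$ the best available bound on the nonlinear term is in $\mathcal{\dot{F}}^{-2,1}$ (via $\|(|\cdot|^2\hat h)^{*j}\|_{L^1_\xi}\le \|h\|_2^j$), and the smoothing estimate $\|e^{-s\Delta^2}f\|_{\mathcal{\dot{F}}^{2,1}} \lesssim s^{-1}\|f\|_{\mathcal{\dot{F}}^{-2,1}}$ then produces the critically divergent integral $\int_0^t (t-\tau)^{-1}\,d\tau$; intermediate norms do not help because $\|h\|_{s+2}$ for $s>0$ is simply not controlled by the iteration space. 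The time-dependent weight makes this worse, not better: the new iterate at time $t$ must carry the weight $e^{\nu(t)|\xi|}$ while the old iterate at time $\tau<t$ only carries $e^{\nu(\tau)|\xi|}$, so an additional factor $e^{b(t-\tau)|\xi|}$ must also be absorbed by the semigroup. No smallness of $T$, $b$, or the ball radius repairs this; the Picard map does not even map your ball back into $C^0_t\mathcal{\dot{F}}^{2,1}_\nu$, let alone contract.

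Two standard repairs exist. Either enlarge the iteration space to include the dissipation norm, i.e.\ contract in $L^\infty_T\mathcal{\dot{F}}^{2,1}_\nu \cap L^1_T \mathcal{\dot{F}}^{6,1}_\nu$, where Fubini in the Duhamel integral trades the $(t-\tau)^{-1}$ singularity for the time-integrated dissipation and the smallness $f_2(\|h_0\|_2)<1$ closes the estimate (this is the critical-space strategy of \cite{JEMS,CCGRPS}); or do what the paper actually does: mollify the equation by heat kernels as in \eqref{pdeSumApprox}, so that the nonlinearity becomes smoothing and the ordinary Picard theorem applies on $\mathcal{\dot{F}}^{K,2}$ with $K\ge 4$, prove the $\epsilon$-uniform analogues \eqref{energy-diss2.approx} and \eqref{energy-dissF.approx} of the a priori bounds, and pass to the limit $\epsilon \to 0$. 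Your propagation of the $\mathcal{\dot{F}}^{0,2}$ norm and your appeal to the uniqueness argument of Proposition \ref{prop.uniqueness} are fine, but without one of these two mechanisms the existence step of your proposal fails.
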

 
To prove this we perform a regularization of \eqref{pdeSum} as follows.  Let $\zeta_t$ be the heat kernel in $\mathbb{R}^d$ for $t>0$.  We will consider 
$\zeta_\epsilon$ with $\epsilon>0$ so that $\zeta_\epsilon$ is an approximation to the identity as $\epsilon \to 0$.  We define the regularized equation as:
\begin{equation}\label{pdeSumApprox}
   \partial_t h^\epsilon + \Delta^2 (\zeta_\epsilon * \zeta_\epsilon*  h^\epsilon)  = \Delta \sum_{j=2}^{\infty}  \frac{(-\Delta (\zeta_\epsilon * \zeta_\epsilon*  h^\epsilon))^j}{j!},
   \quad h^\epsilon_0 =  \zeta_\epsilon*  h_0.
\end{equation}
This regularized system \eqref{pdeSumApprox} can be directly estimated using all of the apriori estimates from the previous sections.   In particular all the previous estimates for \eqref{pde} in this paper continue to straightforwardly apply to the approximate problem \eqref{pdeSumApprox}.

These estimates allow us to prove a local existence theorem for the regularized system \eqref{pdeSumApprox} using the Picard theorem on a Banach space $C^0([0,T_\epsilon];\mathcal{\dot{F}}^{K,2})$ for some $K\ge 4$.  We find the abstract evolution system given by $\partial_th^\epsilon=F(h^\epsilon)$ where $F$ is Lipschitz on the open set $\{f(x)\in \mathcal{\dot{F}}^{K,2}:\|f\|_{\dot{\mathcal{F}}^{2,1}}<y_*\}$. 
Observe that $h_0^\epsilon \in \mathcal{\dot{F}}^{K,2}$ since  $h_0\in \mathcal{\dot{F}}^{0,2}$.   Further, since the convolutions are taken with the heat kernel, we can prove analyticity for $h^\epsilon$.

In particular directly following \eqref{energy-diss2} we obtain for some $\delta_1>0$ that
\begin{equation} \label{energy-diss2.approx}
  \frac{d}{dt} \| h^\epsilon(t) \|_2 
   + 
\delta_1 \| \zeta_\epsilon * \zeta_\epsilon*  h^\epsilon \|_{6} \,
  \le  0.
\end{equation}
Similarly following  following \eqref{energy-dissF} we obtain for some $\delta_2>0$ that
\begin{equation} \label{energy-dissF.approx}
  \frac{d}{dt} \| h^\epsilon(t) \|_{\mathcal{\dot{F}}^{0,2}}^2 
   + \delta_2 \| \zeta_\epsilon * \zeta_\epsilon*  h^\epsilon\|_{\mathcal{\dot{F}}^{0,2}}^2 
  \le  0 \,.
\end{equation}
From these estimates, \eqref{energy-diss2.approx} and \eqref{energy-dissF.approx}, we obtain the strong convergence needed to take a limit as $\epsilon \to 0$ in \eqref{pdeSumApprox} and obtain the unique solution from Proposition \ref{prop.local} on the uniform time interval $[0,T]$ for some $T>0$.

In the following we show how to to reach analyticity in short time as in Proposition \ref{prop.local}.  The approximation scheme in \eqref{pdeSumApprox} is well designed to reach the analytic regime in short time, and maintain the analyticity in the limit as $\epsilon \to 0$.  Below, we explain the gain of analyticity with the a priori estimate.

\subsection{Reach analyticity in a short time}
We use $\nu(t) = b t$ (for some $b>0$ to be determined) in the analytic space \eqref{analyticnorm} with $s=2$ and $p=1$.  Note that $|\xi|^3 \le |\xi|^6 + |\xi|^2$.  From \eqref{Fourier2} we obtain the following differential inequality:
\begin{equation} \label{energy13}
\frac{d}{dt} \| h\|_{\mathcal{\dot{F}}^{2,1}_\nu} 
+  (1-\nu'(t)) \| h\|_{\mathcal{\dot{F}}^{6,1}_\nu}
\le  \nu'(t) \| h\|_{\mathcal{\dot{F}}^{2,1}_\nu} +  \| h \|_{\mathcal{\dot{F}}^{6,1}_{\nu}} \sum_{j=2}^{\infty}   \frac{j^{4}}{j!}  \| h \|_{\mathcal{\dot{F}}^{2,1}_{\nu}}^{j-1}
\end{equation}
We recall $f_{2}$ in \eqref{infinite.series.fcn.s}.  We have the estimate
\begin{equation} \label{energy14}
\frac{d}{dt} \| h\|_{\mathcal{\dot{F}}^{2,1}_\nu} 
+  (1-b) \| h\|_{\mathcal{\dot{F}}^{6,1}_\nu}
\le  b \| h\|_{\mathcal{\dot{F}}^{2,1}_\nu} +  \| h \|_{\mathcal{\dot{F}}^{6,1}_{\nu}}   f_{2}(\| h \|_{\mathcal{\dot{F}}^{2,1}_{\nu}})
\end{equation}
Recalling \eqref{condition1}, we can choose a small $T>0$ such that
$$
\| h_{0}\|_2 e^{bT} < y_{*}.
$$
Then by choosing $T$ smaller if necessary, on $0\le t \le T$ by continunity  we have
$$
f_{2}(\| h(t)  \|_{\mathcal{\dot{F}}^{2,1}_{\nu}}) \le b_1 < 1,
$$
where $b_1 = b_1(\| h_{0}\|_2, T)$.  
We choose $b>0$ small enough so that $\delta = 1-b-b_1>0$, and then we have
\begin{equation} \label{energy15}
\frac{d}{dt} \| h\|_{\mathcal{\dot{F}}^{2,1}_\nu} + \delta \| h\|_{\mathcal{\dot{F}}^{6,1}_\nu}
\le  b  \| h\|_{\mathcal{\dot{F}}^{2,1}_\nu}. 
\end{equation}
Then we apply the Gr{\"o}nwall inequality to \eqref{energy15} to obtain
$$
  \| h\|_{\mathcal{\dot{F}}^{2,1}_\nu} (t) \le  \| h_{0}\|_2 e^{bT} \le y_{*}.
$$
This completes the proof of the gain of analyticity, and Proposition \ref{prop.local}.

\section{Global existence in $\mathcal{\dot{F}}^{s,p}$}\label{SizeConstant}

In this section we briefly collect our previous estimates and explain the proofs of Theorem \ref{Main.Theorem} and Theorem \ref{Cor.Main.Theorem}.
First if  \eqref{condition1} holds then \eqref{energy-diss2} holds for $s=0$ and for $s=2$.  In particular \eqref{energy3} holds.   This global in time bound combined with Proposition \ref{prop.uniqueness} and Proposition \ref{prop.local} yields directly the proof of Theorem \ref{Main.Theorem}.

  We now explain the proof of Theorem \ref{Cor.Main.Theorem}.  Recall that \eqref{energy-diss2} holds for $s=0$ and for $s=2$.  Then from  \eqref{decay1} the solutions to \eqref{pdeSum} from Theorem \ref{Main.Theorem} have the following slow large time decay rate
\begin{equation}  \label{slowDecay1} 
 \| h(t) \|_2 \le C(\| h_0 \|_2, \| h_0 \|_0) (1+t)^{-1/2}.
\end{equation}
We will use that therefore, after time passes, $ \| h(t) \|_2$ can become smaller.  

Notice that  \eqref{energy4} shows that for some $s > 2$, if $\| h_{0} \|_s$ is bounded then, for the solutions from Theorem \ref{Main.Theorem}, the norm $\| h(t) \|_s$ will remain bounded on any time interval $[0,T]$ with an upper bound that depends on $T$.  Further the decay \eqref{slowDecay1} shows that after a time $T_s>0$ passes then we have that \eqref{condition2} holds at $T_s$ which depends on our choice of $s > 2$.  Then we can conclude the high order s-norm norm decrease in \eqref{energy-diss2} for any $s\ge 0$ assuming only that \eqref{condition1} holds and that  $\| h_{0} \|_s$ is bounded.   We do not need to assume that $\| h_{0} \|_s$ is additionally small.

Further by a similar argument, after a different time $T=T(s,p)>0$ passes then we can show that \eqref{conditionF} holds at time $T$, and then we have the general  $\mathcal{\dot{F}}^{s,p}$ norm decrease in \eqref{energy-dissF} assuming additionally only that $\|   h_{0} \|_{\mathcal{\dot{F}}^{s,p}}$ is bounded.   We do not need to assume that $\|   h_{0} \|_{\mathcal{\dot{F}}^{s,p}}$ is additionally small since \eqref{conditionF} holds at time $T$.

Lastly, the fast large time decay rates \eqref{fastest.decay.rate} follow from Proposition \ref{prop.large.decay} and Proposition \ref{prop.bd.uniform} under the assumptions used in the statement of \eqref{fastest.decay.rate}.

\section{Long time existence and decay in the analytic norms $\dot{\mathcal{F}}^{s,1}_{\nu}$}\label{DecaySectionA}

In this section we will present finally the proof of Theorem \ref{Analytic.Theorem}.  This will show the global in time uniform gain of analyticity with radius of analyticity that grows like $t^{1/4}$ for large $t \gtrsim 1$.  This will also show the uniform large time decay rates of the analytic norms with the optimal linear decay rate as in Remark \ref{remark:optimal}.

\subsection{Gevrey estimates and the radius of analyticity }

We consider $\nu(t)>0$, and now we look at estimates for \eqref{pdeSum} the $\mathcal{\dot{F}}^{s,1}_{\nu}$ space with  $s\ge 0$.  We will show that the radius of analyticity grows like $\nu(t) \approx t^{1/4}$.

We multiply \eqref{Fourier1} by $|\xi|^{s} e^{\nu(t) |\xi| } \overline{\hat h} |{\hat h}|^{-1}(\xi,t)$ to obtain
\begin{align}
   \partial_t \left( |\xi|^{s} e^{\nu(t) |\xi|} |\hat h|(\xi,t) \right)&
    - \nu'(t) |\xi|^{s+1} e^{\nu(t) |\xi| } |\hat h|(\xi,t) 
   + |\xi|^{s+4} e^{\nu(t) |\xi|} |\hat h|(\xi,t) \nonumber \\
    &=  -     \sum_{j=2}^{\infty}   \frac{1}{j!}  |\xi|^{s+2} e^{\nu(t) |\xi|} \overline{\hat h}|{\hat h}|^{-1}(\xi,t)(|\xi|^2 \hat h)^{*j} (\xi,t). 
\label{Fourier771}
\end{align}
To estimate the nonlinear term on the right side we use Young's inequality as 
\begin{multline} 
 \int_{\mathbb{R}^d} |\xi|^{s+2} 
  e^{\nu(t) |\xi|}
 | \overline{\hat h}(\xi) |\hat h|^{-1}(\xi) (|\cdot|^2 \hat h(\cdot))^{*j}(\xi)  |  \,d\xi  
\le 
  \| |\xi|^{s+2} e^{\nu(t) |\xi|} (|\xi|^2 \hat h)^{*j} \|_{L^1_\xi} 
 \\
  \le 
 j^{s + 2} \| (|\xi|^{s+4} e^{\nu(t) |\xi|}\hat h)*(|\xi|^2 e^{\nu(t) |\xi|}\hat h)^{*(j-1)} \|_{L^1_\xi} 
 \\
  \le 
j^{s + 2} \|f\|_{\mathcal{\dot{F}}^{s+4,1}_{\nu}}(t)  \| h \|_{\mathcal{\dot{F}}^{2,1}_{\nu}}^{j-1}.  
\label{convolutionPnu7}
\end{multline} 
Now we use (\ref{convolutionPnu7})  to obtain the following estimate
\begin{equation} \label{energy117}
\frac{d}{dt} \| h\|_{\mathcal{\dot{F}}^{s,1}_\nu} 
\le   
 \nu'(t) \| h\|_{\mathcal{\dot{F}}^{s+1,1}_\nu}
-  
\| h\|_{\mathcal{\dot{F}}^{s+4,1}_\nu}
+
 \| h \|_{\mathcal{\dot{F}}^{s+4,1}_{\nu}} \sum_{j=2}^{\infty}   \frac{j^{s+2}}{j!}  \| h \|_{\mathcal{\dot{F}}^{2,1}_{\nu}}^{j-1}.
\end{equation}
We will use \eqref{energy117} to  simultaneously prove a global bound and large time decay rates.

For now we will focus on the second two terms on the left side of \eqref{energy117}.  
From H{\"o}lder's inequality, and then Young's inequality with $\frac{3}{4}+\frac{1}{4}=1$, then multiply and divide by $\nu(t)^{3/4}$ and we have that
\begin{equation}\notag
\| h\|_{\mathcal{\dot{F}}^{s+1,1}_\nu} 
\le
\| h\|_{\mathcal{\dot{F}}^{s,1}_\nu}^{3/4} 
\| h\|_{\mathcal{\dot{F}}^{s+4,1}_\nu}^{1/4} 
\le
\frac{3}{4}
\nu(t)^{-1}
\| h\|_{\mathcal{\dot{F}}^{s,1}_\nu}
+
\frac{1}{4}
\nu(t)^{3}\| h\|_{\mathcal{\dot{F}}^{s+4,1}_\nu}.
\end{equation}
Further, since $e^x \le 1+x e^x$ for $x\ge 0$ we also have that
\begin{multline}\notag
\| h\|_{\mathcal{\dot{F}}^{s,1}_\nu} = 
\int_{\mathbb{R}^d} |\xi|^{s}e^{\nu(t) |\xi|}|\hat{h}(\xi,t)| d\xi
\\
\le 
\int_{\mathbb{R}^d} |\xi|^{s}|\hat{h}(\xi,t)| d\xi
+
\nu(t)\int_{\mathbb{R}^d} |\xi|^{s+1}e^{\nu(t) |\xi|}|\hat{h}(\xi,t)| d\xi
\\
\le 
\| h\|_{\mathcal{\dot{F}}^{s,1}}
+
\frac{3}{4}
\| h\|_{\mathcal{\dot{F}}^{s,1}_\nu}
+
\frac{\nu(t)^4}{4}
\| h\|_{\mathcal{\dot{F}}^{s+4,1}_\nu}.
\end{multline}
We conclude that 
\begin{equation}\notag
\| h\|_{\mathcal{\dot{F}}^{s,1}_\nu}
\le
4\| h\|_{\mathcal{\dot{F}}^{s,1}}
+
\nu(t)^4
\| h\|_{\mathcal{\dot{F}}^{s+4,1}_\nu}.
\end{equation}

Now we estimate 
\begin{multline}\notag
 \nu'(t) \| h\|_{\mathcal{\dot{F}}^{s+1,1}_\nu}
-  \| h\|_{\mathcal{\dot{F}}^{s+4,1}_\nu}
\\
\le
\frac{3}{4} \nu'(t)\nu(t)^{-1}
\| h\|_{\mathcal{\dot{F}}^{s,1}_\nu}
+
\frac{1}{4} \nu'(t)\nu(t)^{3}
\| h\|_{\mathcal{\dot{F}}^{s+4,1}_\nu}
-  \| h\|_{\mathcal{\dot{F}}^{s+4,1}_\nu}
\\
\le
3 \nu'(t)\nu(t)^{-1}
\| h\|_{\mathcal{\dot{F}}^{s,1}}
+
\frac{3}{4} \nu'(t) \nu(t)^{-1}\nu(t)^4
\| h\|_{\mathcal{\dot{F}}^{s+4,1}_\nu}
\\
+
\frac{1}{4} \nu'(t)\nu(t)^{3}
\| h\|_{\mathcal{\dot{F}}^{s+4,1}_\nu}
-  \| h\|_{\mathcal{\dot{F}}^{s+4,1}_\nu}.
\end{multline}

Now with $b>0$ from Proposition \ref{prop.local},  for $t \ge 0$, we choose 
\begin{equation}
\label{nu.def.global}
\nu(t) = ((bt_0)^4+a t)^{1/4},
\end{equation}
for some $t_0>0$ and $a>0$ to be determined.  Then $\nu'(t) = \frac{a}{4} ((b t_0)^4+a t)^{-3/4}$ and $\nu'(t) = \frac{a}{4} \nu(t)^{-3}$.  Further then 
$$
\nu'(t) \nu(t)^{-1}\nu(t)^4 =  \frac{a}{4},
$$
and 
$
\nu'(t) \nu(t)^{-1} =  \frac{a}{4} \nu(t)^{-4}.
$

And then
$$
 \nu'(t) \| h\|_{\mathcal{\dot{F}}^{s+1,1}_\nu}
-  \| h\|_{\mathcal{\dot{F}}^{s+4,1}_\nu}
\le
\frac{a}{4} \nu(t)^{-4}
\| h\|_{\mathcal{\dot{F}}^{s,1}}
-
\left( 1 - \frac{a}{4} \right)  \| h\|_{\mathcal{\dot{F}}^{s+4,1}_\nu}.
$$
Later we will choose $0<a<4$.  

Now we use 
\begin{equation}\notag
-\| h\|_{\mathcal{\dot{F}}^{s+4,1}_\nu} 
\le 
- \nu(t)^{-4} \| h\|_{\mathcal{\dot{F}}^{s,1}_\nu}
+
4\nu(t)^{-4}\| h\|_{\mathcal{\dot{F}}^{s,1}}.
\end{equation}
We choose $\alpha, \beta \ge 0$ such that $\alpha + \beta =1$.    Then we have
\begin{multline}\notag
 \nu'(t) \| h\|_{\mathcal{\dot{F}}^{s+1,1}_\nu}
-  \| h\|_{\mathcal{\dot{F}}^{s+4,1}_\nu}
\\
\le
\left( \frac{a}{4}+4\alpha \left( 1 - \frac{a}{4} \right) \right) \nu(t)^{-4}
\| h\|_{\mathcal{\dot{F}}^{s,1}}
- 
\alpha \left( 1 - \frac{a}{4} \right) \nu(t)^{-4} \| h\|_{\mathcal{\dot{F}}^{s,1}_\nu}
\\
-
\beta\left( 1 - \frac{a}{4} \right)  \| h\|_{\mathcal{\dot{F}}^{s+4,1}_\nu}.
\end{multline}
These are the main upper bounds that we will use.

Now returning to \eqref{energy117}, we obtain the following differential inequality
\begin{multline}
\label{energy11final}
\frac{d}{dt} \| h\|_{\mathcal{\dot{F}}^{s,1}_\nu} 
+
\delta \nu(t)^{-4} \| h\|_{\mathcal{\dot{F}}^{s,1}_\nu}
\le
\lambda \nu(t)^{-4} \| h\|_{\mathcal{\dot{F}}^{s,1}}
-
\kappa  \| h\|_{\mathcal{\dot{F}}^{s+4,1}_\nu}
\\
+
 \| h \|_{\mathcal{\dot{F}}^{s+4,1}_{\nu}} \sum_{j=2}^{\infty}   \frac{j^{s+2}}{j!}  \| h \|_{\mathcal{\dot{F}}^{2,1}_{\nu}}^{j-1}.
\end{multline}
In the above $\delta>0$ is a small constant, $C_0>0$, and $0<\kappa \eqdef\beta\left( 1 - \frac{a}{4} \right)<1$ can be chosen arbitrarily close to $1$.    Further 
$\lambda \eqdef \left( \frac{a}{4}+4\alpha \left( 1 - \frac{a}{4} \right)\right)>0$ can be chosen to be small.

We use the estimate \eqref{energy11final}, combined with the following procedure to obtain the global decay of the analytic norm with radius \eqref{nu.def.global}.  For now we restrict to the case $s=2$.   We start with the solution from Theorem \ref{Main.Theorem} with initial data satisfying 
$$
\| h_0\|_{\mathcal{\dot{F}}^{2,1}} < y_*.
$$
Further as in Theorem \ref{Cor.Main.Theorem} assume that $h_0 \in \mathcal{\dot{F}}^{2,2}$,  $h_0 \in \mathcal{\dot{F}}^{0,2}$, and $\|h_0\|_{-d,\infty} <\infty$.    Then from \eqref{fastest.decay.rate} we can choose a large time $T_\epsilon>0$ such that for $t\ge 0$ we have
 \begin{equation}\label{decay.rate.epsilon}
\| h(T_\epsilon+t) \|_2 \le \epsilon'  \nu(t)^{-(2+d)},
\end{equation}
where we will choose $\epsilon' >0$ small in a moment.  

From the local existence result in Proposition \ref{prop.local}, we know that equation \eqref{pdeSum} has a gain of analyticity on a time local interval  starting with the initial data described in the previous paragraph.   We take initial data for the gain of analyticity as $\| h(T_\epsilon) \|_2 <  \epsilon$ where $\epsilon = \epsilon(\epsilon',t_0)>0$ is small and $\to 0$ as $\epsilon' \to 0$.   Then for a short time interval $[T_\epsilon, T_\epsilon + 2t_0]$ from Proposition \ref{prop.local} we still have 
$$
\| h(T_\epsilon+t) \|_{\mathcal{\dot{F}}^{2,1}_\mu} <  \epsilon
$$ 
for all $t \in [T_\epsilon, T_\epsilon + 2t_0]$ where in Proposition \ref{prop.local} we use $\mu(t) = bt$.

This is how we choose $t_0>0$ small from \eqref{nu.def.global} to guarantee the above based upon our choice of $\epsilon$.  We use estimate \eqref{energy11final} with $s=2$ starting at time $T_\epsilon + t_0$ with $\nu(t) = ((b t_0)^4+a t)^{1/4}$ as in \eqref{nu.def.global}.  To ease the notation, in the rest of this paragraph we write 
$\tilde{h}(t) = h( T_\epsilon + t_0+t)$ and  
$\tilde{h}_0 = h( T_\epsilon + t_0)$.   Now following the arguments below \eqref{energy2}   using \eqref{energy11final} with $s=2$ we have
\begin{multline}
\label{energy11final.for2}
\frac{d}{dt} \| \tilde{h}\|_{\mathcal{\dot{F}}^{2,1}_\nu} 
+
\delta \nu(t)^{-4} \| \tilde{h}\|_{\mathcal{\dot{F}}^{2,1}_\nu}
\le
\lambda \nu(t)^{-4}  \| \tilde{h}\|_{\mathcal{\dot{F}}^{2,1}}
-
\kappa  \| \tilde{h}\|_{\mathcal{\dot{F}}^{6,1}_\nu}
\\
+
 \| \tilde{h} \|_{\mathcal{\dot{F}}^{6,1}_{\nu}} \sum_{j=2}^{\infty}   \frac{j^{4}}{j!}  \| \tilde{h} \|_{\mathcal{\dot{F}}^{2,1}_{\nu}}^{j-1}
 \\
 \le
\epsilon \lambda \nu(t)^{-(2+d)-4}
-
C_1  \| \tilde{h}\|_{\mathcal{\dot{F}}^{6,1}_\nu}
\le
\epsilon \lambda \nu(t)^{-(2+d)-4}.
\end{multline}
Here we used that  $\| \tilde{h}\|_{\mathcal{\dot{F}}^{2,1}} \lesssim \nu(t)^{-(2+d)}$.
Above we can take $C_1 >0$ since, as above, we can choose $\| \tilde{h} \|_{\mathcal{\dot{F}}^{2,1}_{\nu}}$ to be arbitrarily small.  We multiply by $\nu(t)^{4\delta/a}$ to obtain
\begin{equation}
\notag
\frac{d}{dt} \left(\nu(t)^{4\delta/a}  \| \tilde{h}(t) \|_{\mathcal{\dot{F}}^{2,1}_\nu} \right)
\le
\epsilon \lambda \nu(t)^{4\delta/a-(2+d)-4}.
\end{equation}
Then we integrate to obtain
\begin{equation}
\label{energy.again.2}
\| \tilde{h}(t) \|_{\mathcal{\dot{F}}^{2,1}_\nu} 
\le
\frac{\epsilon \lambda }{\delta - a(2+d)/4} \frac{a}{4} \nu(t)^{-(2+d)}
+
\| \tilde{h}_0 \|_{\mathcal{\dot{F}}^{2,1}_\nu}  \nu(0)^{4\delta/a}\nu(t)^{-4\delta/a}.  
\end{equation}
This concludes the main estimates of this paragraph.  

Now choosing $a>0$ sufficiently small, depending upon the other parameters in \eqref{energy.again.2}, allows us to propagate the assumption that $\| h(T_\epsilon+t_0+t) \|_{\mathcal{\dot{F}}^{2,1}_\nu} <  \epsilon$ for all $t\ge 0$.    Therefore \eqref{energy11final.for2} and \eqref{energy.again.2} hold for all times $t\ge 0$.  We conclude that
\begin{equation}
\label{decay.gevrey}
\| h(T+t) \|_{\mathcal{\dot{F}}^{2,1}_\nu} 
=
\int_{\mathbb{R}^d} |\xi|^{2 }e^{\nu(t) |\xi|}|\hat{h}(\xi,T+t)| d\xi
\lesssim (1+t)^{-(2+d)/4}, 
\end{equation}
which holds uniformly for some fixed $T>0$ and all $t\ge 0$.  Here we recall that $\nu(t)$ is given by \eqref{nu.def.global}.

Note that we can further prove \eqref{decay.gevrey} for any $s\ge 0$ by using the same technique, and obtain the decay rate in \eqref{analytic.decay.rate}.  These estimates now grant Theorem \ref{Analytic.Theorem}.
 \hfill  {\bf Q.E.D.}

\appendix
\section{Numerical simulations}\label{sec:appendixA}

This section contains the plots of numerical simulations conducted by Prof. Tom Witelski.  These are numerical simulations of \eqref{pde} on $0\le x\le 2\pi$ subject to periodic boundary conditions.  They were carried out using a backward Euler
fully-implicit finite difference scheme that was second-order accurate in space. A series of simulations were carried out starting from the initial data, $h_0(x)=A\sin(x)$ with $A>0$, which satisfies $ \| h_0 \|_{2}=A$, where we recall that the norm $ \| \cdot \|_{2}$ is defined as in \eqref{Snorm},  and $\max_x \big| h''_{0}(x) \big| =A$.

\begin{center}

\begin{figure}

\includegraphics[scale=.5]{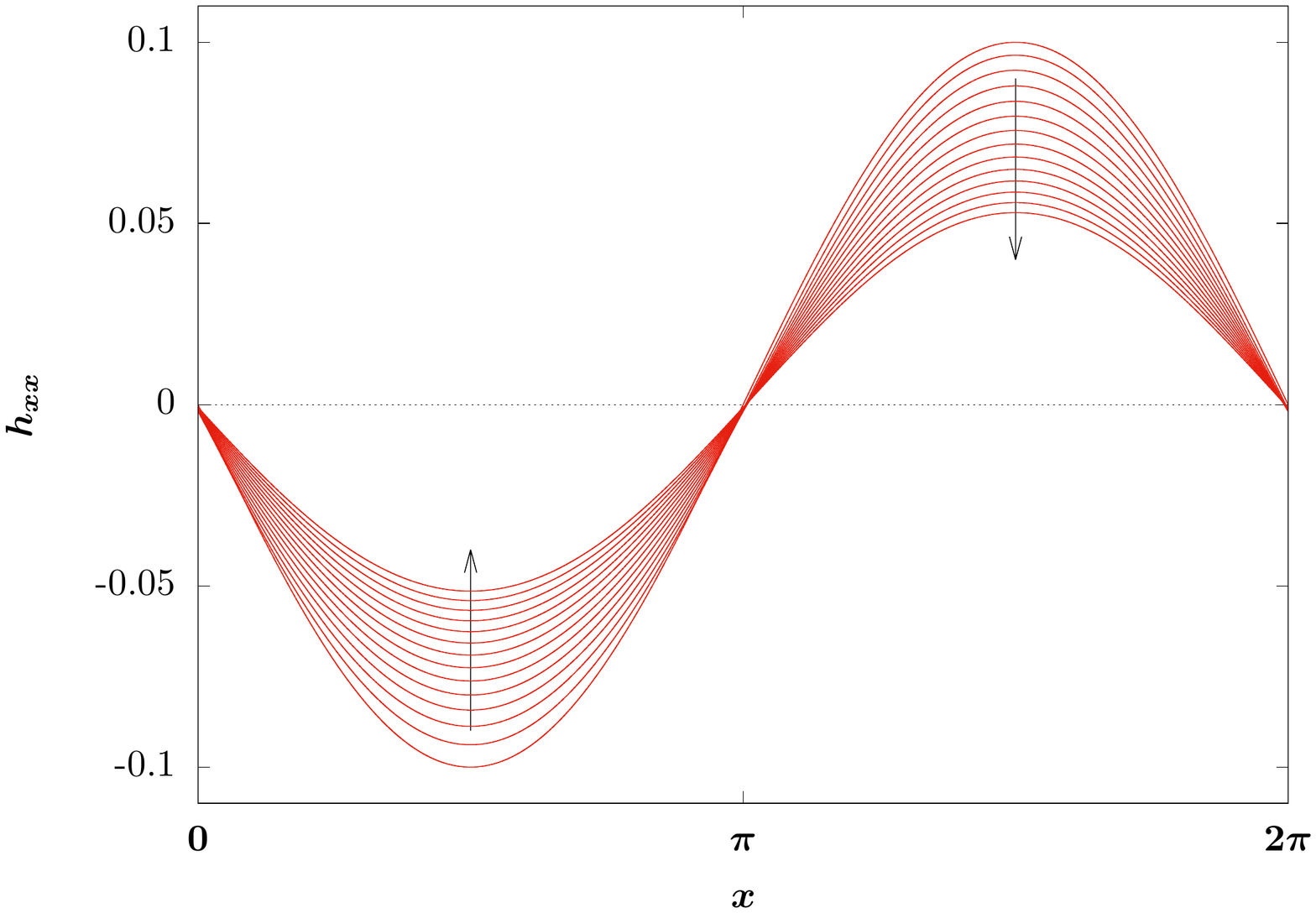}

\includegraphics[scale=.48]{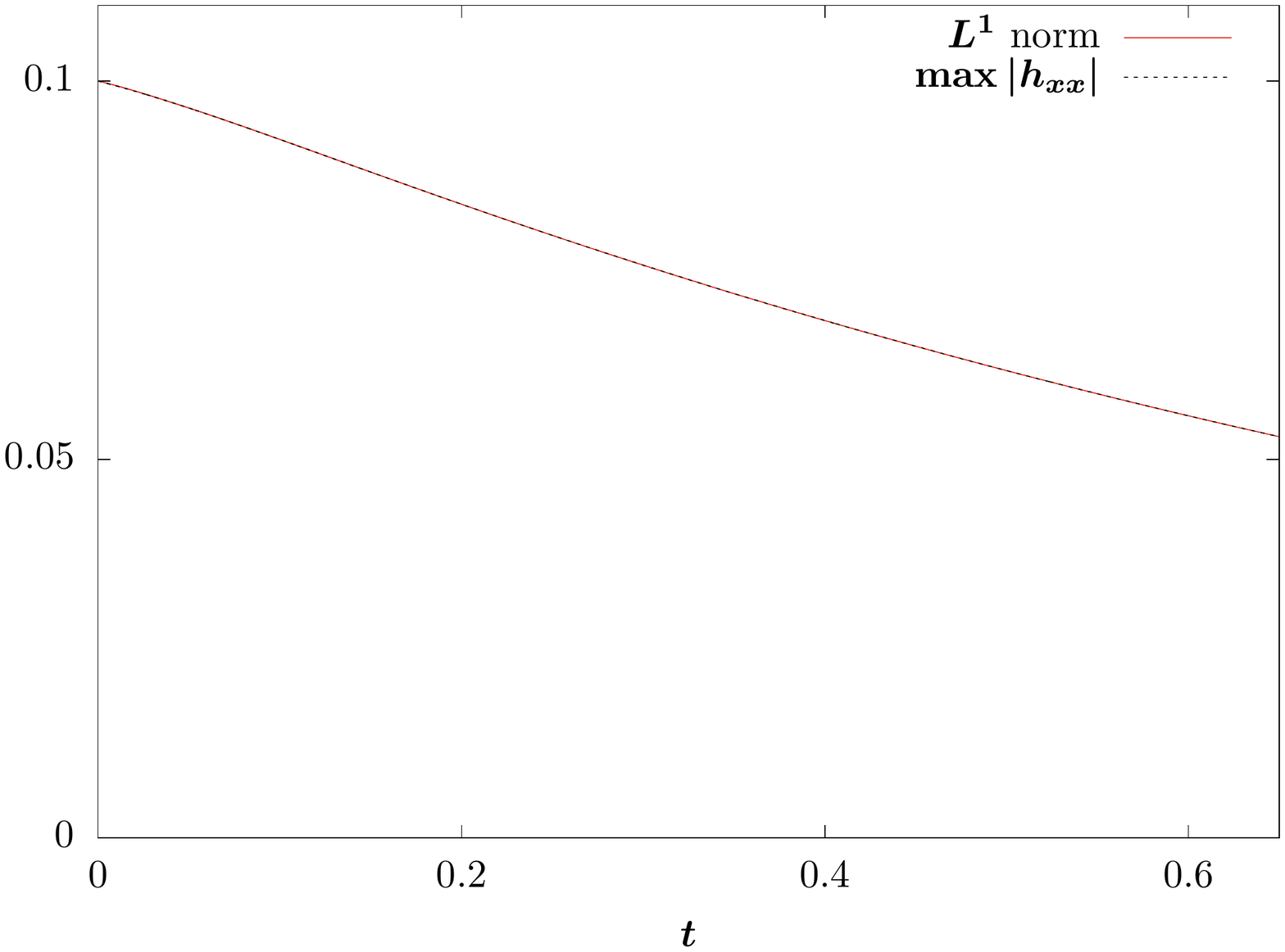}

\caption{Evolution of the solution to the exponential PDE \eqref{pde} with initial data $h_0(x) = 0.1 \times \sin(x)$ in a period domain $[0, 2\pi]$.
The top figure shows some  decreasing height profiles at a sequence of times.  The bottom figure shows the (monotone) time history  of $\| h(\cdot, t)\|_{2}$ 
and $\| h(\cdot, t)\|_{W^{2,\infty}} $.
}

\end{figure}

\end{center}

\eject

\begin{center}

\begin{figure}
\includegraphics[scale=.5]{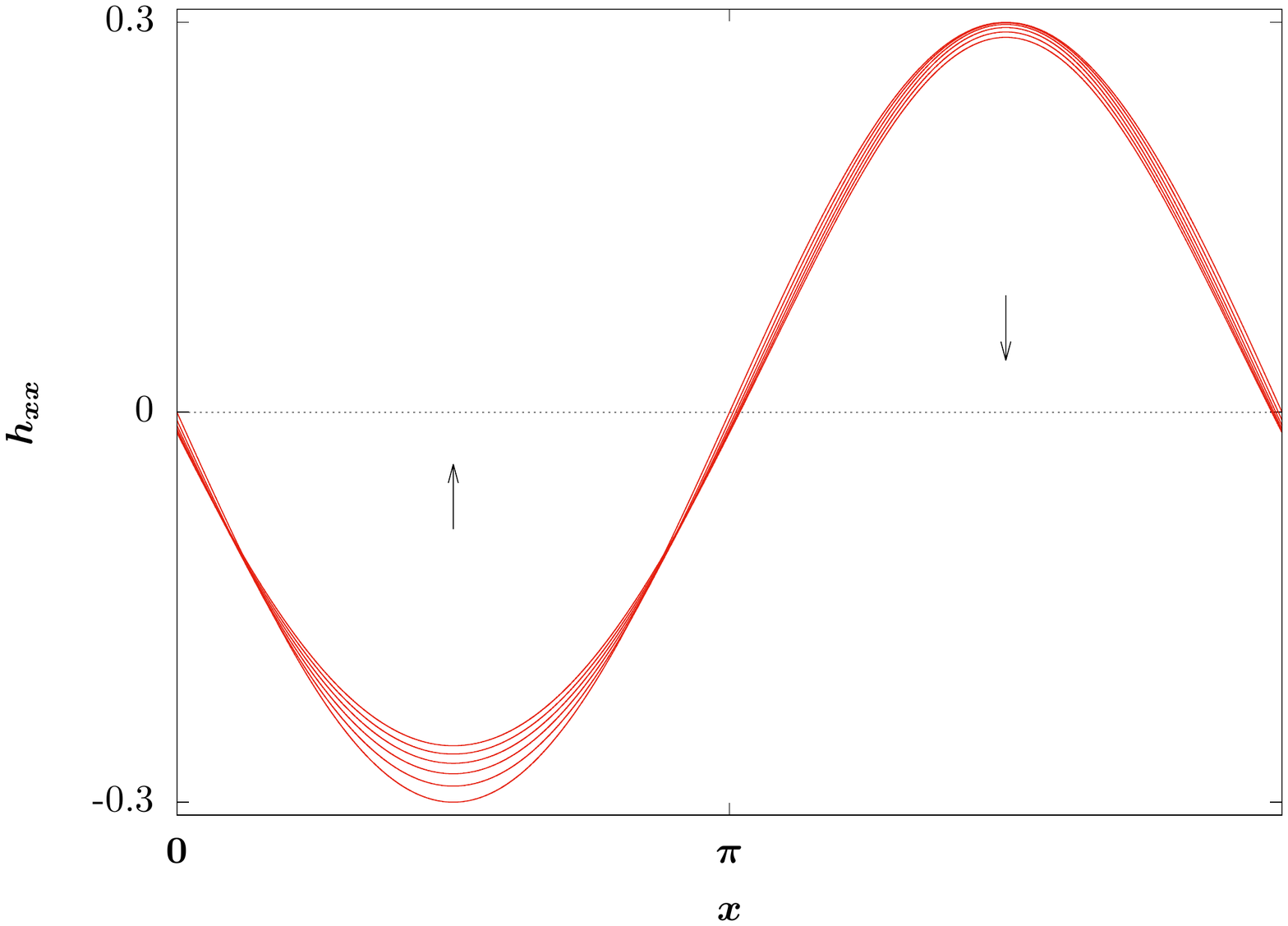}

\includegraphics[scale=.48]{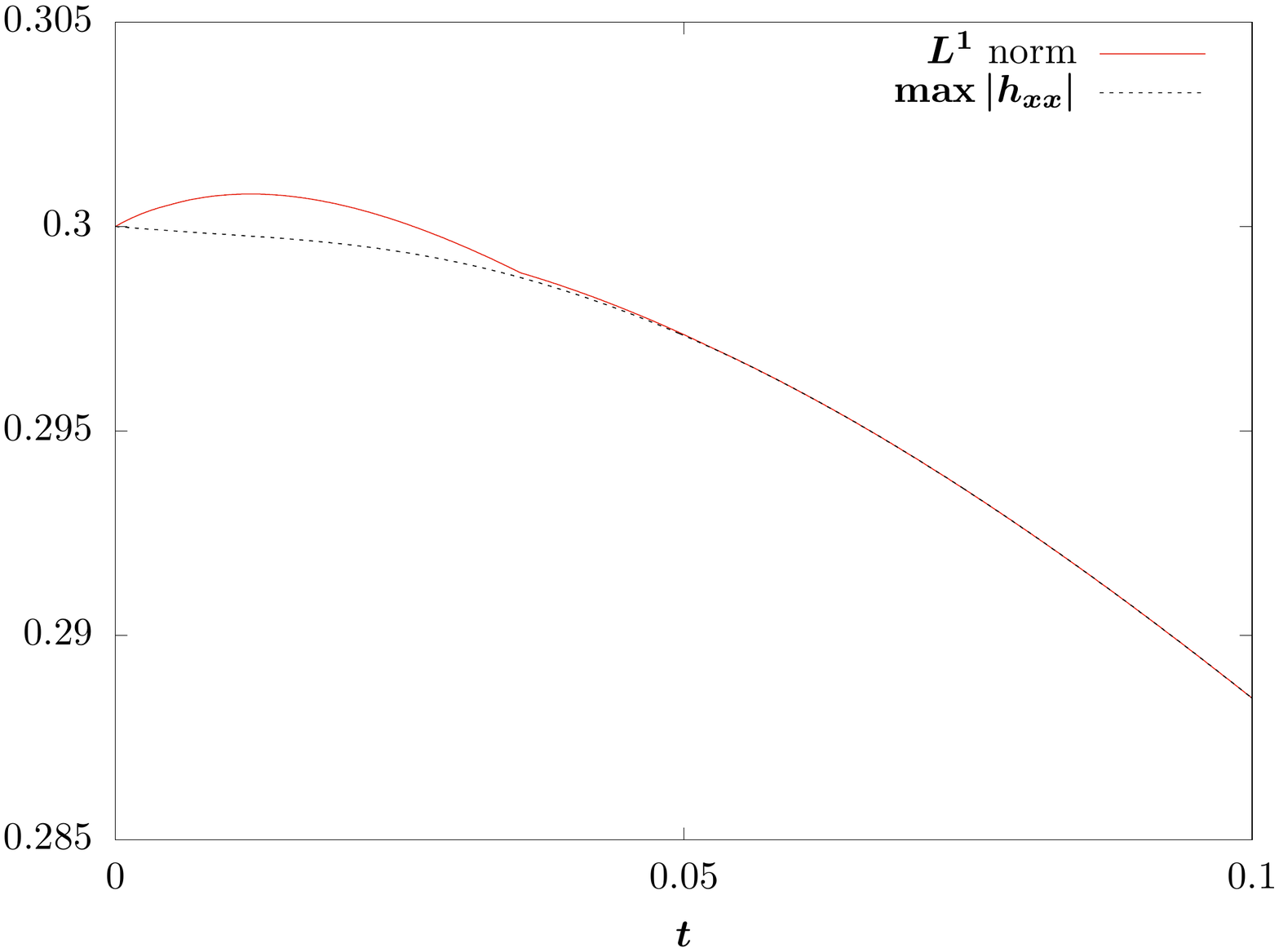}

\caption{Evolution of the solution to exponential PDE \eqref{pde}  with initial data $h_0(x) = 0.3 \times \sin(x)$ in a period domain $[0, 2\pi]$.
The top figure shows some  decreasing hight profiles at a sequence of times.  
The bottom figure shows the (non-monotone) time history  of $\| h(\cdot, t)\|_{2}$ 
and $\| h(\cdot, t)\|_{W^{2,\infty}} $.
}

\end{figure}

\end{center}

\eject

\eject

\begin{center}

\begin{figure}

\includegraphics[scale=.5]{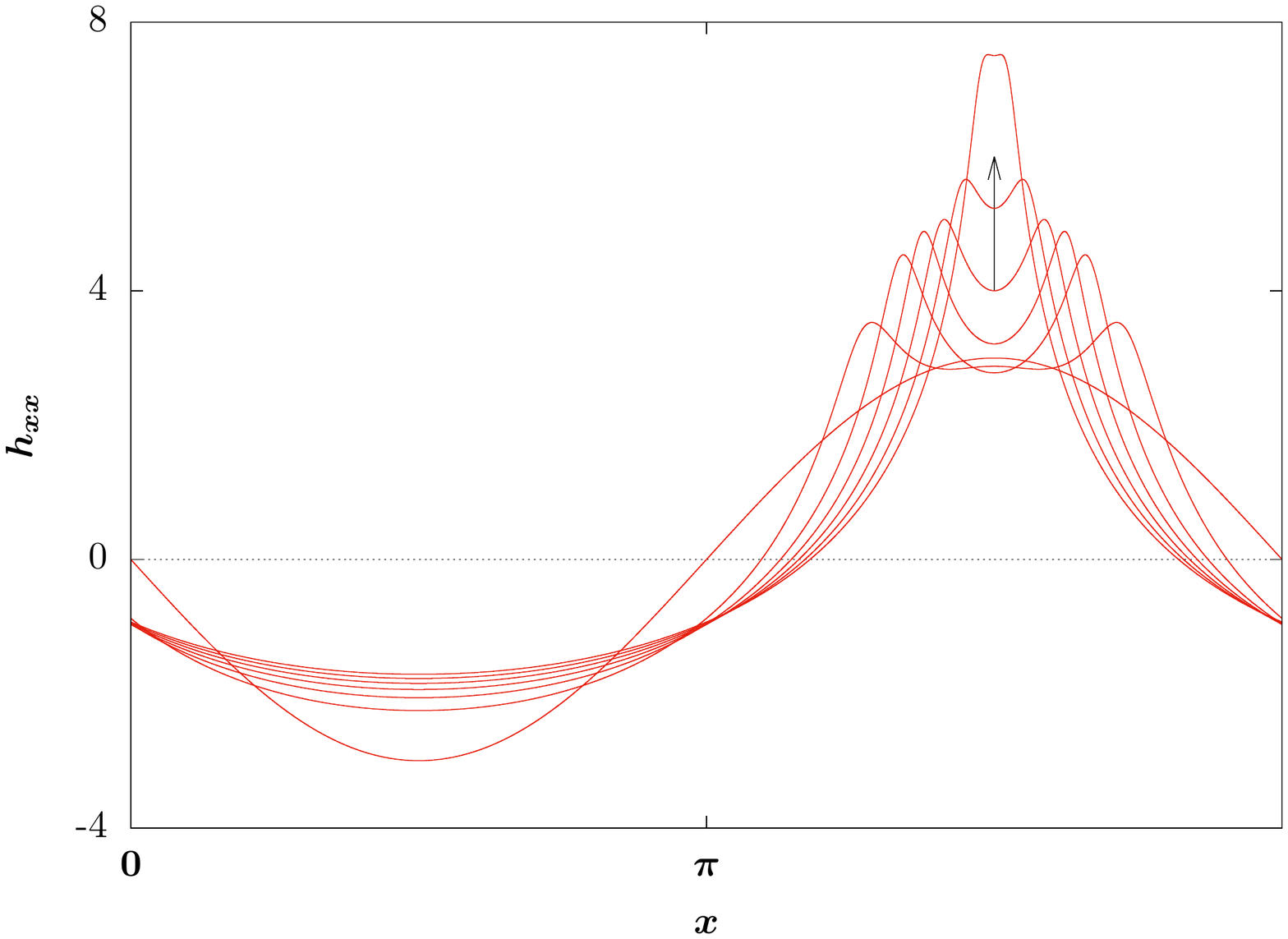}

\includegraphics[scale=.48]{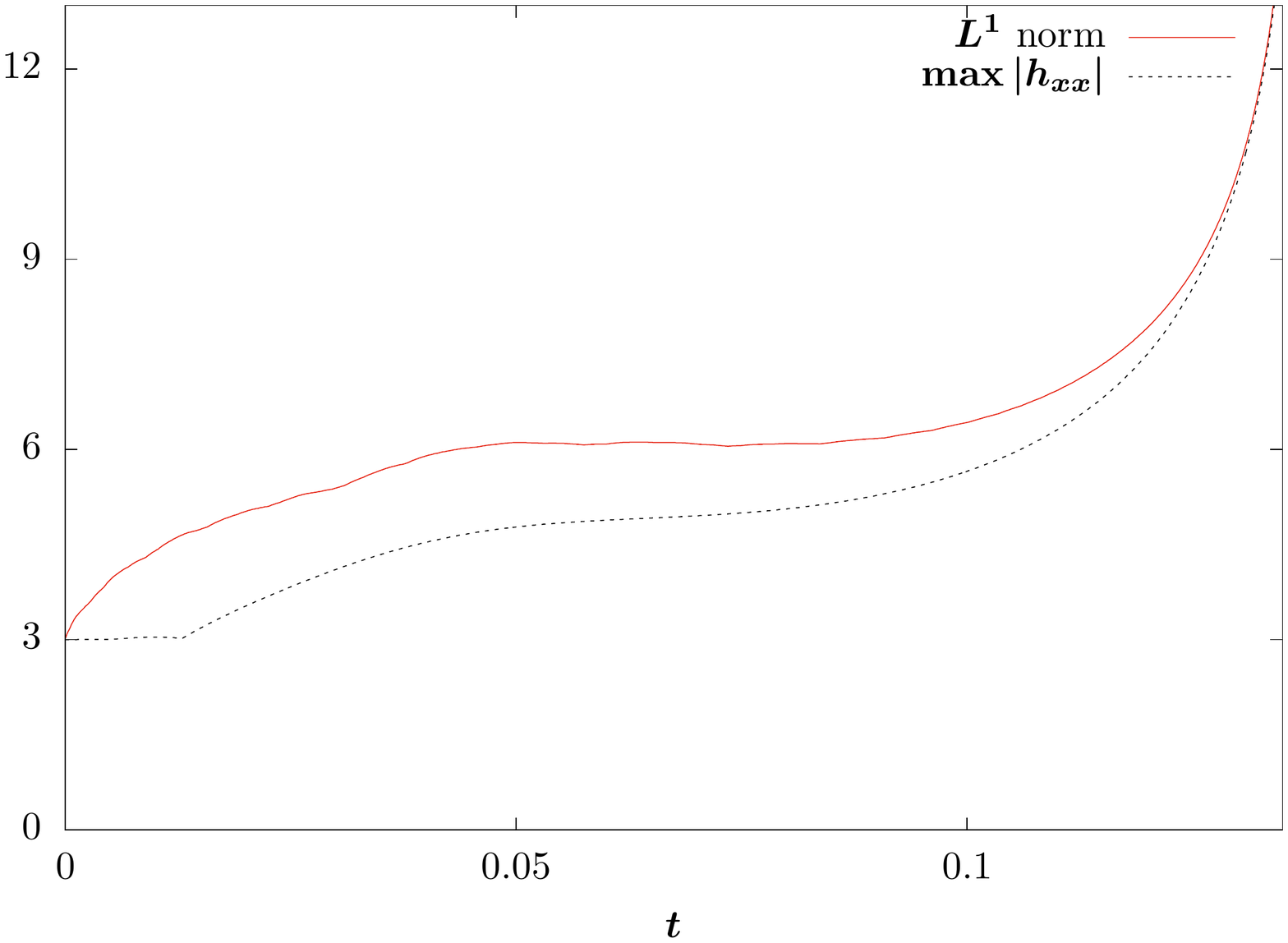}

\caption{Evolution of solution to exponential PDE \eqref{pde} with initial data $h_0(x) = 3 \times \sin(x)$ in a periodic domain $[0, 2\pi]$.
The top figure shows some  blow up hight profiles at a sequence of times.  
The bottom figure shows the blow up time history  of $\| h(\cdot, t)\|_{2}$ 
and $\| h(\cdot, t)\|_{W^{2,\infty}}$.
}

\end{figure}
\end{center}


%
%
%

\end{document}